\newtheorem{lm}{Lemma}[section]
\newtheorem{thm}{Theorem}[section]
\newcounter{saveeqn}%
\title{\Large\bf Grazing-sliding bifurcations in planar $\mathbb{Z}_2$-symmetric Filippov systems \thanks{
Supported by by National Key R\&D Program of China \#2022YFA1005900, NSFC \#12201509 and \#12271378, NSFSPC \#2023NSFSC1360 and Sichuan Science and Technology Program \#2024NSFJQ0008.}
}
\author{Xingwu Chen$^1$, ~~Zhihao Fang$^2$, ~~Tao Li$^3$~\!\!
\footnote{Corresponding author: Tao Li (litao@swufe.edu.cn)}
\\
{\small 1. School of Mathematics, Sichuan University, Chengdu, Sichuan 610064, P. R. China}\\
{\small 2. School of Mathematics, China University of Mining and Technology,}\\
  {\small Xuzhou, Jiangsu 221116, P. R. China}\\
{\small 3. School of Mathematics, Southwestern University of Finance and Economics,}\\
  {\small Chengdu, Sichuan 611130, P. R. China}
}
\date{}
\begin{document}
\maketitle

\begin{abstract}
This paper aims to explore the effect of $\mathbb{Z}_2$-symmetry on grazing-sliding bifurcations in planar Filippov systems. We consider the scenario where the unperturbed system is $\mathbb{Z}_2$-symmetric and its subsystem exhibits a hyperbolic limit cycle grazing the discontinuity boundary at a fold. Employing differential manifold theory, we reveal the intrinsic quantities of unfolding the bifurcation and rigorously demonstrate that the bifurcation set is a codimension-two submanifold of the set of all $\mathbb{Z}_2$-symmetric Filippov systems. After deriving an explicit non-degenerate condition with respect to parameters, we systematically establish the complete bifurcation diagram with exact asymptotics for all bifurcation boundaries by displacement map method combined with asymptotic analysis.
\vskip 0.2cm
{\bf 2020 MSC:} 34A36, 34C23, 37G15.

{\bf Keywords:} Filippov system, fold-fold, limit cycle, sliding bifurcation, $\mathbb{Z}_2$-symmetry.
\end{abstract}

\baselineskip 15pt
\parskip 10pt
\thispagestyle{empty}
\setcounter{page}{1}

\section{Introduction}
\setcounter{equation}{0}
\setcounter{lm}{0}
\setcounter{thm}{0}
\setcounter{rmk}{0}
\setcounter{df}{0}
\setcounter{cor}{0}

The bifurcation theory of dynamical systems has been developed to characterize how the qualitative behavior of systems change under parameter variations. While smooth dynamical systems exhibit well-documented bifurcation phenomena with established analytical frameworks, methods and theories, real-world evolutionary processes often demonstrate intrinsic discontinuity driven by transient events. Typical manifestations include switching mechanisms in circuit systems\cite{MD}, stick-slip oscillations in dry friction systems \cite{MG,ABN}, and threshold-driven population control in pest management \cite{STJLYX}, etc. Such discontinuous dynamical phenomena fundamentally challenge the applicability of smooth systems.
The predominant modelling paradigm is usually the discontinuous piecewise-smooth differential system, also called {\it Filippov system} \cite{MD}. In particular, a planar Filippov system with two zones separated by a smooth curve can be written in the form $(\dot x, \dot y)=Z(x, y)$ with discontinuous piecewise-smooth vector field
\begin{eqnarray}
Z(x, y)=\left\{
\begin{aligned}
  &Z^+(x, y)
~~~~&& {\rm if}~(x, y)\in\Sigma^+,\\
  &Z^-(x, y)
~~~~&& {\rm if}~(x, y)\in\Sigma^-,\\
\end{aligned}
\right.
\label{mainsystem}
\end{eqnarray}
where $Z^\pm\in\mathfrak{X}$ and $\mathfrak{X}$ is the set of all $C^k$ ($k\ge1$) vector fields
defined on $N:=\{(x,y)\in\mathbb{R}^2:x^2+y^2\le r^2\}$ and endowed with the $C^k$-topology
\cite[Chapter 2]{HFFFZ}, $r>0$ is sufficiently large, and
$$\Sigma^+:=\left\{(x, y)\in N: ~h(x, y)>0\right\},~~~~~~~~\Sigma^-:=\left\{(x, y)\in N: ~h(x, y)<0\right\}$$
are two zones split by the smooth curve $\Sigma:=\left\{(x,y)\in N: h(x,y)=0\right\}$. Here $h: N\rightarrow\mathbb{R}$ is a $C^k$ function having $0$ as a regular value.
This shift has prompted substantial researchers to investigate the discontinuity-induced bifurcations in Filippov systems.

Among discontinuity-induced bifurcations, {\it sliding bifurcations} have emerged as a central focus in modern bifurcation theory. These bifurcations characterize the dynamical transitions resulting from the perturbations of {\it tangential periodic orbits} ( as formally defined in Section 2). With the efforts of many researchers, some contributions have been achieved in understanding sliding bifurcations, e.g., \cite{YAK,MG1,AGN,FLMHXZ,LTXC,AC,EFEPFT,NTZ,WHW, FC}. Depending on the topological structures of tangential periodic orbits, sliding bifurcations can be classified into four distinct types: grazing-sliding bifurcations, switching-sliding bifurcations, crossing-sliding bifurcations, and multi-sliding bifurcations, see \cite{YAK, MD}. In this paper we will specially focus on {\it grazing-sliding bifurcations}, which occur when a limit cycle of some subsystem grazes the discontinuity boundary. The prominence stems from their ubiquity in practical applications, including but not limited to mechanical models with dry friction \cite{MBPK1,MG,ABN}, Filippov-type predator-prey models \cite{YAK}, and two-stage population models \cite{STJLYX}. The theoretical significance of grazing-sliding bifurcations lies in their capacity to generate novel dynamical phenomena. For example, the grazing cycle can accumulate a sliding segment and thus becomes a {\it sliding cycle} \cite{YAK}, namely an isolated periodic orbit having a segment that coincides with the discontinuity boundary. It also may bifurcate into a cycle involving multiple loops, some of which involve sliding segments \cite{RSHO}, or into chaotic attractors \cite{MBPK1}. Even, multiple or infinitely many attractors of different types may coexist in grazing-sliding bifurcations, see \cite{ATTRACTOR1,ATTRACTOR2,ATTRACTOR3}.

A comprehensive characterization of bifurcation phenomena requires complete identification of all potential bifurcation scenarios and their corresponding diagrammatic representations. This is extremely challenging in high-dimensional grazing-sliding bifurcations, where complex dynamical structures such as multi-loop periodic orbits and chaotic attractors may merge, see \cite{MBPK1,ATTRACTOR1,ATTRACTOR2,ATTRACTOR3,RSHO,MBPK}. Given these complexities, investigating planar systems provides a fundamental starting point for understanding grazing-sliding bifurcation. In \cite{YAK}, researchers showed two codimension-one grazing-sliding bifurcation diagrams for planar Filippov systems under the non-degenerate conditions: (i) the grazing cycle is hyperbolic, and (ii) the grazing point is a {\it regular-fold}, namely at which
one vector field exhibits quadratic tangency to the discontinuity boundary while the other maintains transversal intersection.
The first one is called the {\it persistence scenario} (cf. \cite{MD}), where the grazing cycle is stable and it bifurcates into either a standard cycle or a sliding cycle, as illustrated in Figure~\ref{codimonegsbper} with critical parameter value $\mu=0$. The second one is called the {\it non-smooth fold scenario} (cf. \cite{MD}), where the grazing cycle is unstable, and either a sliding cycle and a standard cycle simultaneously bifurcate from the grazing cycle, or the grazing cycle disappears and no cycles bifurcate, as depicted in Figure~\ref{codimonegsbnonf}
with critical parameter value $\mu=0$.

Following the work of \cite{YAK}, recent research efforts have extended to deal with high-codimension grazing-sliding bifurcations. The analysis of such bifurcations serves dual purposes: enhancing theoretical understanding of low-codimension bifurcations while addressing practical requirements, as real-world models frequently involve multiple parameters. Building upon the framework of \cite{YAK}, there are two principal approaches for generating high-codimension grazing-sliding bifurcations. The first one is to destroy the condition (i), namely maintaining the regular-fold configuration at the grazing point while allowing the grazing cycle to transition from hyperbolic to non-hyperbolic structure. As demonstrated in \cite{AC} for $n$-dimensional systems, this degeneration produces characteristic geometric signatures: at a generic intersection between the smooth and discontinuity-induced bifurcation curves, another bifurcation curve emerges tangentially to the former. Alternatively, relaxing the condition (ii) provides a second pathway, namely preserving the hyperbolicity of the grazing cycle while degenerating the grazing point from a regular-fold to a higher-order tangent point. Studies in \cite{FLMH} and \cite{FLMHXZ} consider that the grazing point is a {\it fold-fold}, at which both vector fields are quadratically tangent to the discontinuity boundary, establishing the stability criteria of the grazing cycle and deriving lower bounds for the maximum number of limit cycles that bifurcate from the grazing cycle. Subsequent work by \cite{LTXC}, under the same degeneration as in \cite{FLMH,FLMHXZ}, reveals four distinct codimension-two grazing-sliding bifurcations, with associated bifurcation diagrams classified through combined analysis of the types of fold-fold and the internal stability of grazing cycle.
For a near-Hamiltonian system, the number
of crossing limit cycles bifurcating from two grazing cycles connecting a fold-fold is given by using Melnikov functions
in \cite{FLMHsfsf}.

\begin{figure}
  \begin{minipage}[t]{0.33\linewidth}
  \centering
  \includegraphics[width=1.470in]{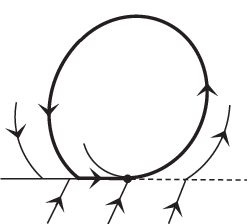}
  \caption*{$\mu<0$}
  \end{minipage}
  \begin{minipage}[t]{0.33\linewidth}
  \centering
  \includegraphics[width=1.50in]{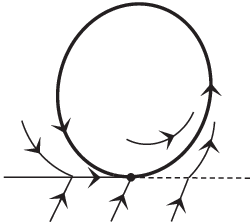}
  \caption*{$\mu=0$}
  \end{minipage}
  \begin{minipage}[t]{0.33\linewidth}
  \centering
  \includegraphics[width=1.52in]{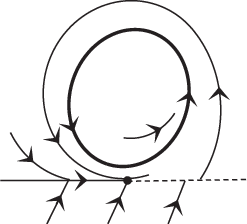}
  \caption*{$\mu>0$}
  \end{minipage}
\caption{{\small Codimension-one grazing-sliding bifurcation for a stable grazing cycle.}}
\label{codimonegsbper}
\end{figure}
\begin{figure}
  \begin{minipage}[t]{0.33\linewidth}
  \centering
  \includegraphics[width=1.50in]{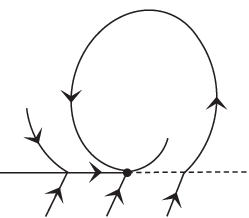}
  \caption*{$\mu<0$}
  \end{minipage}
  \begin{minipage}[t]{0.33\linewidth}
  \centering
  \includegraphics[width=1.470in]{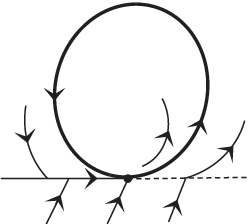}
  \caption*{$\mu=0$}
  \end{minipage}
  \begin{minipage}[t]{0.33\linewidth}
  \centering
  \includegraphics[width=1.50in]{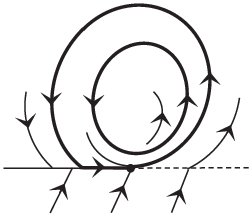}
  \caption*{$\mu>0$}
  \end{minipage}
\caption{{\small Codimension-one grazing-sliding bifurcation for an unstable grazing cycle.}}
\label{codimonegsbnonf}
\end{figure}

In this paper, we investigate grazing-sliding bifurcations in planar $\mathbb{Z}_2$-symmetric Filippov systems. We consider the scenario where the unperturbed system possesses $\mathbb{Z}_2$-symmetry and its subsystem exhibits a hyperbolic limit cycle that grazes the discontinuity boundary at a fold. Under these conditions, the unperturbed system naturally gives rise to a $\mathbb{Z}_2$-symmetric figure eight loop. Crucially, this geometric structure is not an artificial construct but actually exists in practical models, as evidenced by prior studies (cf. \cite{CHB}). The primary objective of this work is to characterize the dynamical transitions occurring near such figure eight loops under generic $\mathbb{Z}_2$-symmetric perturbations within the Filippov framework. Our main contributions include two aspects: First, employing differential manifold theory, we reveal the intrinsic quantities of unfolding the bifurcation and demonstrate that the bifurcation set is a codimension-two submanifold of the set of all $\mathbb{Z}_2$-symmetric Filippov systems considered. Second, we derive an explicit non-degenerate condition with respect to parameters and, through a synthesis of displacement map method and asymptotic analysis, systematically establish the bifurcation diagram with precise asymptotic descriptions for all bifurcation boundaries.

This paper is organized as follows. In Section 2 we shortly review some basic notions on Filippov systems involved in this paper. In Section 3 we set up the problem of this paper and then state our main theorems. After introducing two preliminary lemmas in Section 4, we give the proofs of main theorems in Section 5 and Section 6. Finally, an example is showed in Section 7 to realize the bifurcation obtained in this paper.

\section{Notions}

In this section we give a short review of discontinuous piecewise-smooth vector fields with emphasis on (\ref{mainsystem}), see \cite{YAK, MG1, MD, AFF} for more details. Throughout this paper, we call $\Sigma$ the {\it discontinuity boundary} of (\ref{mainsystem}) and denote by $\mathfrak{X}^2=\mathfrak{X}\times\mathfrak{X}$ the set of all vector fields $Z=(Z^+, Z^-)$ of form (\ref{mainsystem}), which can be endowed with the product topology.

Since $Z$ is discontinuous, i.e., $Z^+(x,y)\not\equiv Z^-(x,y)$ on $\Sigma$, the the definition of solutions of smooth vector fields is only suitable for the solutions that do not interact with $\Sigma$, while for the solutions of $Z$ that reach $\Sigma$ at some time, in this paper we use the Filippov's convention \cite{AFF} to define them. According to this convention, $\Sigma$ is separated into the {\it crossing set}
$$\Sigma^c:=\left\{(x, y)\in\Sigma: Z^+h(x,y)Z^-h(x,y)>0\right\}$$
and the {\it sliding set}
$$\Sigma^s:=\left\{(x, y)\in\Sigma: Z^+h(x,y)Z^-h(x,y)\le0\right\},$$
where $Z^\pm h=\langle Z^\pm,\nabla h\rangle$ and $\langle \cdot,\cdot\rangle$ denotes inner product. A sliding segment in the interior of $\Sigma^s$ is said to be {\it stable} if $Z^+h(x,y)<0<Z^-h(x,y)$ and {\it unstable} if $Z^+h(x,y)>0>Z^-h(x,y)$.

On $\Sigma^c$, both $Z^+$ and $Z^-$ are transverse to $\Sigma$ and their normal components have the same sign, which leads that
the solution reaching $\Sigma$ at a point in $\Sigma^c$ will cross $\Sigma$. On $\Sigma^s$, either both $Z^+$ and $Z^-$ are transverse to $\Sigma$ and their normal components have the opposite sign, or at least one of normal components is zero. In this case, by the Filippov convex method \cite{AFF} the solution reaching $\Sigma$ at a point in $\Sigma^s$ is allowed to slide along $\Sigma^s$, and the sliding dynamics obeys the vector field
$$
Z^s(x,y):=\mu Z^-(x,y)+(1-\mu)Z^+(x,y),\qquad (x,y)\in\Sigma^s,
$$
where $\mu$ is selected to ensure that $Z^s$ is tangent to $\Sigma$, i.e.,
$$\mu Z^-h(x,y)+(1-\mu)Z^+h(x,y)=0,\qquad (x,y)\in\Sigma^s.$$
Clearly,
$\mu={Z^+h(x,y)}/({Z^+h(x,y)-Z^-h(x,y)})$
for $Z^-h(x,y)-Z^+h(x,y)\ne0$. Here $Z^s$ is called the {\it sliding vector field} and its an equilibrium is called a {\it pseudo-equilibrium} of $Z$ \cite{YAK}. In brief, the solutions of $Z$ that do interact with $\Sigma$ can be constructed by concatenating the standard solutions in $\Sigma^\pm$ and the sliding solutions in $\Sigma$, see \cite{YAK, AFF} for more details.

The set
$$\partial\Sigma^s:=\left\{(x, y)\in\Sigma: Z^+h(x,y)Z^-h(x,y)=0\right\}$$
plays an important role in the bifurcation analysis of Filippov systems. Following \cite{MG1,YAK}, we recall some notions related to $\partial\Sigma^s$. Let $p\in\partial\Sigma^s$. Then it is a {\it boundary equilibrium} of $Z^\pm$ if $Z^\pm(p)=0$, or a {\it tangent point} of $Z^\pm$ if $Z^\pm(p)\ne0$ and $Z^\pm h(p)=0$, or a {\it regular point} of $Z^\pm$ if $Z^\pm h(p)\ne0$. In addition, a tangent point $p$ of $Z^\pm$ is said to be a {\it fold} if $(Z^\pm)^2h(p)\ne0$.
$p$ is called a {\it regular-fold} of $Z$ if it is a fold of one sub-vector field and a regular point of the other, and a {\it fold-fold} of $Z$ if it is a fold of both sub-vector fields. A fold $p$ of $Z^+$ is said to be {\it visible} (resp. {\it invisible}) if $(Z^+)^2h(p)>0$ (resp. $<0$), and a fold $p$ of $Z^-$ is said to be {\it visible} (resp. {\it invisible}) if $(Z^-)^2h(p)<0$ (resp. $>0$).

There are some different criteria for distinguishing and naming the periodic orbits and homoclinic orbits of $Z\in\mathfrak{X}^2$ (cf. \cite{YAK, MG1, AGN}). Therefore, before formally introducing our work, it is necessary to clarify the criterion adopted in this paper in order not to cause confusion. First, a periodic orbit that lies totally in $\Sigma^+$ or $\Sigma^-$ is called a {\it standard periodic orbit},
and a closed curve formed by concatenating the regular orbits of two sub-vector fields only at some
points of $\Sigma^c$ is called a {\it crossing periodic orbit}. Besides, $Z$ can have a closed curve that consists of regular orbits and tangent points.
In this case, we treat the closed curve as a periodic orbit rather than a homoclinic orbit in the sense that the travelling time to a tangent point is finite, and we call it a {\it tangential periodic orbit}. In particular, a tangential periodic orbit can be classified into the following three cases.
\vspace{-12pt}
\begin{itemize}
\setlength{\itemsep}{0mm}
\item[(i)] {\it Sliding periodic orbit}, which contains a sliding segment in $\Sigma^s\setminus\partial\Sigma^s$. A sliding periodic orbit is said to be {\it stable} (resp. {\it unstable}) if its sliding segment is stable (resp. unstable).
\item[(ii)] {\it Critical crossing periodic orbit}, which occupies $\Sigma^+$ and $\Sigma^-$ and intersects $\Sigma$ only at some points in the closure of $\Sigma^c$.
    \item[(iii)] {\it Grazing periodic orbit}, which lies totally in $\Sigma^+\cup\Sigma$ or $\Sigma^-\cup\Sigma$ and intersects $\Sigma$ only at tangent points. Clearly,
 a grazing periodic orbit must be a periodic orbit of a sub-vector field.
\end{itemize}
\vspace{-12pt}
An isolated standard (resp. crossing, sliding, critical crossing, and grazing) periodic orbit of $Z\in\mathfrak{X}^2$ in the set of all periodic orbits is called a {\it standard} (resp. {\it crossing, sliding, critical crossing}, and {\it grazing}) cycle.

Finally, a closed curve of $Z\in\mathfrak{X}^2$ that consists of regular orbits and a unique equilibrium, including standard equilibrium in $\Sigma^+$ and $\Sigma^-$, pseudo-equilibrium in $\Sigma^s\setminus\partial\Sigma^s$ and boundary equilibrium in $\partial\Sigma^s$, is called a {\it homoclinic orbit}. In particular, if a homoclinic orbit contains a sliding segment, it is called a {\it sliding homoclinic orbit}.

\section{Main results}
\setcounter{equation}{0}
\setcounter{lm}{0}
\setcounter{thm}{0}
\setcounter{rmk}{0}
\setcounter{df}{0}
\setcounter{cor}{0}

This section is devoted to setting  up our problem and stating the main results. From now on, for $Z=(Z^+, Z^-)\in\mathfrak{X}^2$, we assume $k\ge2$ and always take $h(x,y)=y$, i.e., $\Sigma$ is the $x$-axis, because only the discontinuity boundary $\Sigma$ in the vicinity of a fold-fold is involved in this paper as we will see.

Let $\Omega_0$ be the set formed by the vector field $Z_0=(Z^+_0,Z^-_0)\in\mathfrak{X}^2$ satisfying the following assumptions.
\vspace{-14pt}
\begin{description}
\setlength{\itemsep}{0mm}
\item[(H1)] $Z^+_0$ has an anticlockwise rotary $T_0$-periodic hyperbolic limit cycle $\Gamma_0$ lying in $\overline{\Sigma^+}=\{(x,y): x^2+y^2<r^2, y\ge0\}$ and touching $\Sigma$ at a unique point, which lies at the origin $O$ and is a fold of $Z^+_0$, where $T_0$ is the minimal positive period. Thus
    \begin{equation}\label{cewr343}
    f^+_0(0,0)>0,\qquad g^+_0(0,0)=0,\qquad g^+_{0x}(0,0)>0,
    \end{equation}
    where $(f^+_0, g^+_0)$ is the coordinates of $Z^+_0$ and the subscript $x$ denotes partial derivative.
\setlength{\itemsep}{0mm}
\item[(H2)] $Z_0$ is $\mathbb{Z}_2$-symmetric with respect to $O$, i.e., $Z_0^-(x,y)=-Z_0^+(-x,-y)$ for all $(x,y)\in N$.
\end{description}
Then each $Z_0\in\Omega_0$ has a figure eight loop $\Upsilon_0$ kinking at the fold-fold $O$. Our goal is to explore what typically happens in a sufficiently small figure eight annulus neighborhood of $\Upsilon_0$ when $Z_0$ is perturbed in $\Omega$, where $\Omega\subset\mathfrak{X}^2$ is the set of all $\mathbb{Z}_2$-symmetric vector fields with respect to $O$.

First we clarify the codimension of the set of unperturbed vector fields in the perturbation class.

\begin{thm}\label{thm-codim}
Assume that $Z_0\in\Omega_0$ has a figure eight loop $\Upsilon_0$ characterized by {\bf(H1)} and {\bf (H2)}. Then for a sufficiently small figure eight annulus neighborhood $\mathcal{A}$ of $\Upsilon_0$ there exists a neighborhood $\mathcal{U}\subset\Omega$ of $Z_0$ such that $\mathcal{U}_0$ is a codimension-2 $C^k$ submanifold of $\mathcal{U}$, where $\mathcal{U}_0$ is the set of all vector fields of $\mathcal{U}$ having a figure eight loop characterized by {\bf(H1)} and {\bf (H2)} in $\mathcal{A}$.
\end{thm}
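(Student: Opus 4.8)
The plan is to present $\mathcal{U}_0$ as a regular zero level set of a $C^1$ map valued in $\mathbb{R}^2$ and then to invoke the submersion (regular value) theorem for Banach manifolds. First I would use {\bf(H2)} to reduce the ambient space. Every $Z\in\Omega$ obeys $Z^-(x,y)=-Z^+(-x,-y)$, so $Z^-$ is forced by $Z^+$, while conversely any $Z^+\in\mathfrak{X}$ yields an admissible symmetric pair; hence $Z\mapsto Z^+$ identifies $\Omega$ with an open subset of the Banach space $C^k(N,\mathbb{R}^2)$ and $\mathcal{U}$ with a neighborhood of $Z_0^+$ therein. Under this identification {\bf(H2)} holds automatically and, by symmetry, the lower cycle is the image of the upper one, so the figure-eight requirement collapses to a single condition on $Z^+$: it must carry a hyperbolic limit cycle grazing $\Sigma$ at $O$ as a fold, lying in the upper part of $\mathcal{A}$. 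Among these data, hyperbolicity, the visible-fold inequality $(Z^+)^2h(O)=f^+(O)g^+_x(O)>0$ inherited from (\ref{cewr343}), and the fact that $\Gamma_0$ is the only cycle of $Z_0^+$ inside $\mathcal{A}\cap\{y\ge0\}$ are all strict and open, hence persist after shrinking $\mathcal{U}$; only the \emph{location} of the contact imposes genuine constraints.

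Next I would construct the defining map. Since $\Gamma_0$ is a hyperbolic cycle of the smooth field $Z_0^+$, its Poincar\'e map has a hyperbolic fixed point, so the implicit function theorem produces for each nearby $Z^+$ a unique cycle $\Gamma(Z^+)\subset\mathcal{A}$ depending on $Z^+$ at least $C^1$ (indeed $C^{k-1}$) in the $C^k$ topology. Locating on $\Gamma(Z^+)$ the horizontal-contact condition $g^+=0$ and using the fold nondegeneracy $\tfrac{d}{dt}g^+\big|_O=f^+(O)g^+_x(O)>0$ for a second application of the implicit function theorem, I obtain a unique contact point $p(Z^+)=(x_*(Z^+),y_*(Z^+))$ near $O$, again $C^1$ in $Z^+$. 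Put $\Phi(Z^+):=(x_*(Z^+),y_*(Z^+))$; then $\Phi(Z_0^+)=(0,0)$, and the remarks above give, after shrinking $\mathcal{U}$, the exact description $\mathcal{U}_0=\Phi^{-1}(0,0)$: the equation $y_*=0$ makes $\Gamma(Z^+)$ touch $\Sigma$, while $x_*=0$ forces the contact to sit at $O$, which is precisely the symmetric figure-eight kink.

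The crux, and the step I expect to be the main obstacle, is to show that $D\Phi(Z_0^+)$ is surjective, i.e. that $(0,0)$ is a regular value. For this I would differentiate the two components along perturbations $W\in C^k(N,\mathbb{R}^2)$. The vertical part $Dy_*(Z_0^+)[W]$ records the first-order normal displacement of the cycle and has the Melnikov form $\oint_{\Gamma_0}(Z_0^+\wedge W)\,\kappa_0\,dt$ with weight $\kappa_0=\exp(-\int_0^t\operatorname{div}Z_0^+\,ds)$; hyperbolicity makes the associated return-map multiplier differ from $1$, so this functional is not identically zero. The horizontal part $Dx_*(Z_0^+)[W]$ is governed, through the relation $g^+=0$ and the inequality $g^+_x(O)>0$, by perturbing the vertical component near $O$, which slides the zero of $g^+$ along $\Sigma$ at nonzero rate. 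Selecting $W_1,W_2$ that realize these two effects, one verifies that the $2\times2$ matrix $D\Phi(Z_0^+)[W_1,W_2]$ is invertible; this invertibility is exactly the explicit non-degeneracy condition announced in the introduction. Because the target $\mathbb{R}^2$ is finite-dimensional, surjectivity of $D\Phi(Z_0^+)$ automatically provides a closed complemented kernel, so the submersion theorem applies and $\mathcal{U}_0=\Phi^{-1}(0,0)$ is a codimension-2 submanifold of $\mathcal{U}$. The delicate points are the $C^1$ dependence of the contact point on $Z^+$ through the two nested implicit-function arguments, and the verification that the Melnikov functional and the fold-driven horizontal shift are genuinely independent, so that the Jacobian is nonsingular.
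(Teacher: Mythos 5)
Your overall route coincides with the paper's: reduce by the $\mathbb{Z}_2$-symmetry to the upper field $Z^+$, exhibit $\mathcal{U}_0$ as the zero level set of a two-component map (the paper uses $\Lambda(Z)=(\varphi_1(Z^+),\varphi_2(Z^+))$, the fold abscissa from Lemma~\ref{slidynamics} and the height at which the unique nearby cycle crosses the vertical line through the fold from Lemma~\ref{csin34f}; your contact-point coordinates $(x_*,y_*)$ are an equivalent choice), and conclude by the submersion theorem. The genuine gap is in the surjectivity step, which you compress into ``selecting $W_1,W_2$ \dots one verifies that the $2\times2$ matrix is invertible.'' That verification is precisely the content of the paper's Lemma~\ref{co123}, and it is not automatic: any perturbation of $g^+$ near $O$ that slides the fold also displaces the cycle, so the two mechanisms you invoke are coupled and the Jacobian could a priori be singular for a clumsy choice of $W_1,W_2$. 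The paper resolves the coupling constructively: it takes $Z^+_\varepsilon=\bigl(f^++\varepsilon c\,g^+,\; g^+-\varepsilon g^+_x(\varphi_1(Z^+),0)c_1\bigr)$ and chooses the constant $c$ in (\ref{oiajfca343}) — a ratio of integrals along the cycle whose denominator $\int_0^{T_{Z^+}}\lambda_{Z^+}(t)(g^+)^2(\gamma_{Z^+}(t))\,dt$ is strictly positive — exactly so that the cross-effect of the fold-shifting term on the cycle height is cancelled and the derivative of $(\varphi_1,\varphi_2)$ equals any prescribed $(c_1,c_2)$. Your sketch names the two effects but never produces such a compensation, so the invertibility on which everything rests remains unproven.

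There is also a conceptual error in your closing claim that this invertibility ``is exactly the explicit non-degeneracy condition announced in the introduction.'' That condition, (\ref{transversality}), is a transversality hypothesis on a \emph{given two-parameter family} and belongs to Theorem~\ref{mainthm0}; it can perfectly well fail for particular families. In Theorem~\ref{thm-codim} the perturbations range over the whole infinite-dimensional space $\Omega$, so surjectivity of the derivative must be (and is) unconditional: if it were a genuine condition on $Z_0$, the theorem as stated, which assumes only {\bf(H1)} and {\bf(H2)}, would be false. A second, smaller omission: for $\Phi^{-1}(0,0)$ to be a submanifold the derivative must be surjective at \emph{every} point of the level set, not only at $Z_0^+$ — this is why the paper proves Lemma~\ref{co123} for every $Z^+\in\mathcal{U}_2^+$, whereas your argument addresses only $Z_0^+$ (fixable by continuity of the derivative after shrinking $\mathcal{U}$, but it needs to be said).
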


Theorem~\ref{thm-codim} is proved in Section 5.

Next we consider the following two-parametric perturbations of $Z_0\in\Omega_0$ to study the bifurcation phenomena in a sufficiently small figure eight annulus neighborhood $\mathcal{A}$ of $\Upsilon_0$,
\begin{eqnarray}
Z(x, y;\alpha)=\left\{
\begin{aligned}
  &Z^+(x, y;\alpha)=(f^+(x,y;\alpha),g^+(x,y;\alpha))
~~~~&& {\rm if}~(x, y)\in\Sigma^+,\\
  &Z^-(x, y;\alpha)=(-f^+(-x,-y;\alpha),-g^+(-x,-y;\alpha))
~~~~&& {\rm if}~(x, y)\in\Sigma^-,\\
\end{aligned}
\right.
\label{parasys}
\end{eqnarray}
which $C^k$ smoothly depends on $\alpha=(\alpha_1,\alpha_2)\in U\subset\mathbb{R}^2$, where $Z(x,y;0)=Z_0(x,y)$ and $U$ is a neighborhood of $\alpha=0$ such that $Z(x, y;\alpha)\in\mathcal{U}$ for $\alpha\in U$. To state the bifurcation result, we introduce
\begin{equation}\label{ceirucew434}
\begin{aligned}
\kappa_i&:=\int^{T_0}_0\lambda(t)(f^+g^+_{\alpha_i}-g^+f^+_{\alpha_i})(\gamma_0(t);0)dt,\quad i=1,2,\\
\lambda(t)&:=\exp\left(\int^{T_0}_t(f^+_x+g^+_y)(\gamma_0(s);0)ds\right),\qquad t\in[0,T_0],
\end{aligned}
\end{equation}
where $\gamma_0(t)$ is the solution associated to $\Gamma_0$ satisfying $\gamma_0(0)=O$, and the subscripts $x,y,\alpha_i$ are the corresponding partial derivatives.

As we have known from the qualitative theory of smooth dynamical systems, $\Gamma_0$ is a hyperbolic limit cycle of $Z_0^+$ if and only if $\lambda(0)\ne1$, see e.g., \cite{LP}. In particular, $\lambda(0)<1$ (resp. $>1$) means that $\Gamma_0$ is stable (resp. unstable). In the following theorem we only state the bifurcation result for $\lambda(0)<1$ because the case of $\lambda(0)>1$ can be obtained directly from the stated result by the transformation $(x,y,t)\rightarrow(-x,y,-t)$.

\begin{thm}\label{mainthm}
Assume that $Z_0\in\Omega_0$ has a figure eight loop $\Upsilon_0$ characterized by {\bf(H1)}, {\bf (H2)} and $\lambda(0)<1$, and consider its two-parametric perturbation $Z(x,y;\alpha)$ for $\alpha\in U$ given in $(\ref{parasys})$. If
\begin{equation}\label{transversality}
\kappa_1g^+_{\alpha_2}(0,0;0)-\kappa_2g^+_{\alpha_1}(0,0;0)\ne0,
\end{equation}
then for a sufficiently small figure eight annulus neighborhood $\mathcal{A}$ of $\Upsilon_0$ there exists a neighborhood $U^*\subset U$ of $\alpha=0$, a locally smooth invertible reparameterization $(\beta_1,\beta_2)=(\varphi_1(\alpha),\varphi_2(\alpha))$ with $\varphi_1(0)=\varphi_2(0)=0$ for $\alpha\in U^*$, and smooth functions $\psi_i(\beta_1)$ $(i=1,2,3,4,5)$ defined in $\varphi_1(U^*)$, which are quadratically tangent to $\beta_2=0$ at $(\beta_1,\beta_2)=(0,0)$ and satisfy
\begin{equation}\label{cewr324fwe4}
\begin{aligned}
&\psi_1(\beta_1)>\psi_2(\beta_1)>\psi_4(\beta_1)>0\qquad  &&for\quad \beta_1<0,\\
&\psi_3(\beta_1)<\psi_5(\beta_1)<0\qquad  &&for\quad \beta_1>0,
\end{aligned}
\end{equation}
such that the following statements hold in $\mathcal{A}$. These statements describe the bifurcation diagrams in Figures~\ref{symGSB} and \ref{symGSBR1}.
\vspace{-12pt}
\begin{itemize}
\setlength{\itemsep}{0mm}
\item[{\rm(1)}] For $\beta_1=0$ and $\beta_2\ne0$, the fold-fold $O$ persists, and the figure eight loop $\Upsilon_0$ becomes
    \begin{itemize}
    \item[{\rm(1a)}] two hyperbolically stable standard cycles for $\beta_2>0$;
    \item[{\rm(1b)}] a hyperbolically stable crossing cycle for $\beta_2<0$.
    \end{itemize}
\item[{\rm(2)}] For $\beta_1>0$, the fold-fold $O$ becomes two regular-folds $(\beta_1,0)$ and $(-\beta_1,0)$, giving rise to a stable sliding segment $\{(x,0):-\beta_1<x<\beta_1\}$ with a pseudo-saddle $O$, and the figure eight loop $\Upsilon_0$  becomes
    \begin{itemize}
    \item[{\rm(2a)}] two hyperbolically stable standard cycles for $\beta_2>0$;
    \item[{\rm(2b)}] two internally stable grazing cycles for $\beta_2=0$;
    \item[{\rm(2c)}] two stable one-zonal sliding cycles for $\psi_5(\beta_1)<\beta_2<0$;
    \item[{\rm(2d)}] two one-zonal sliding homoclinic orbits to the pseudo-saddle $O$ for $\beta_2=\psi_5(\beta_1)$;
    \item[{\rm(2e)}] a stable two-zonal sliding cycle for $\psi_3(\beta_1)<\beta_2<\psi_5(\beta_1)$;
    \item[{\rm(2f)}] an externally stable critical crossing cycle for $\beta_2=\psi_3(\beta_1)$;
    \item[{\rm(2g)}] a hyperbolically stable crossing cycle for $\beta_2<\psi_3(\beta_1)$.
    \end{itemize}
\item[{\rm(3)}] For $\beta_1<0$, the fold-fold $O$ becomes two regular-folds $(\beta_1,0)$ and $(-\beta_1,0)$, giving rise to an unstable sliding segment $\{(x,0):\beta_1<x<-\beta_1\}$ with a pseudo-saddle $O$, and the figure eight loop $\Upsilon_0$  becomes
    \begin{itemize}
    \item[{\rm(3a)}] two hyperbolically stable standard cycles for $\beta_2>\psi_1(\beta_1)$;
    \item[{\rm(3b)}] two hyperbolically stable standard cycles and an externally stable crossing cycle of multiplicity two for $\beta_2=\psi_1(\beta_1)$;
    \item[{\rm(3c)}] two hyperbolically stable standard cycles and two hyperbolic crossing cycles, where the inner crossing cycle is unstable and the outer one is stable, for $\psi_2(\beta_1)<\beta_2<\psi_1(\beta_1)$;
    \item[{\rm(3d)}] two hyperbolically stable standard cycles, an externally unstable critical crossing cycle, and a hyperbolically stable crossing cycle for $\beta_2=\psi_2(\beta_1)$;
    \item[{\rm(3e)}] two hyperbolically stable standard cycles, an unstable two-zonal sliding cycle, and a hyperbolically stable crossing cycle for $\psi_4(\beta_1)<\beta_2<\psi_2(\beta_1)$;
    \item[{\rm(3f)}] two hyperbolically stable standard cycles, two one-zonal sliding homoclinic orbits to the pseudo-saddle $O$, and a hyperbolically stable crossing cycle for $\beta_2=\psi_4(\beta_1)$;
    \item[{\rm(3g)}] two hyperbolically stable standard cycles, two unstable one-zonal sliding cycles, and a hyperbolically stable crossing cycle for $0<\beta_2<\psi_4(\beta_1)$;
    \item[{\rm(3h)}] two internally stable grazing cycles and a hyperbolically stable crossing cycle for $\beta_2=0$;
    \item[{\rm(3i)}] a hyperbolically stable crossing cycle for $\beta_2<0$.
    \end{itemize}
\end{itemize}
\end{thm}

\begin{figure}[h]
  \begin{minipage}[t]{1.0\linewidth}
  \centering
  \includegraphics[width=5.2in]{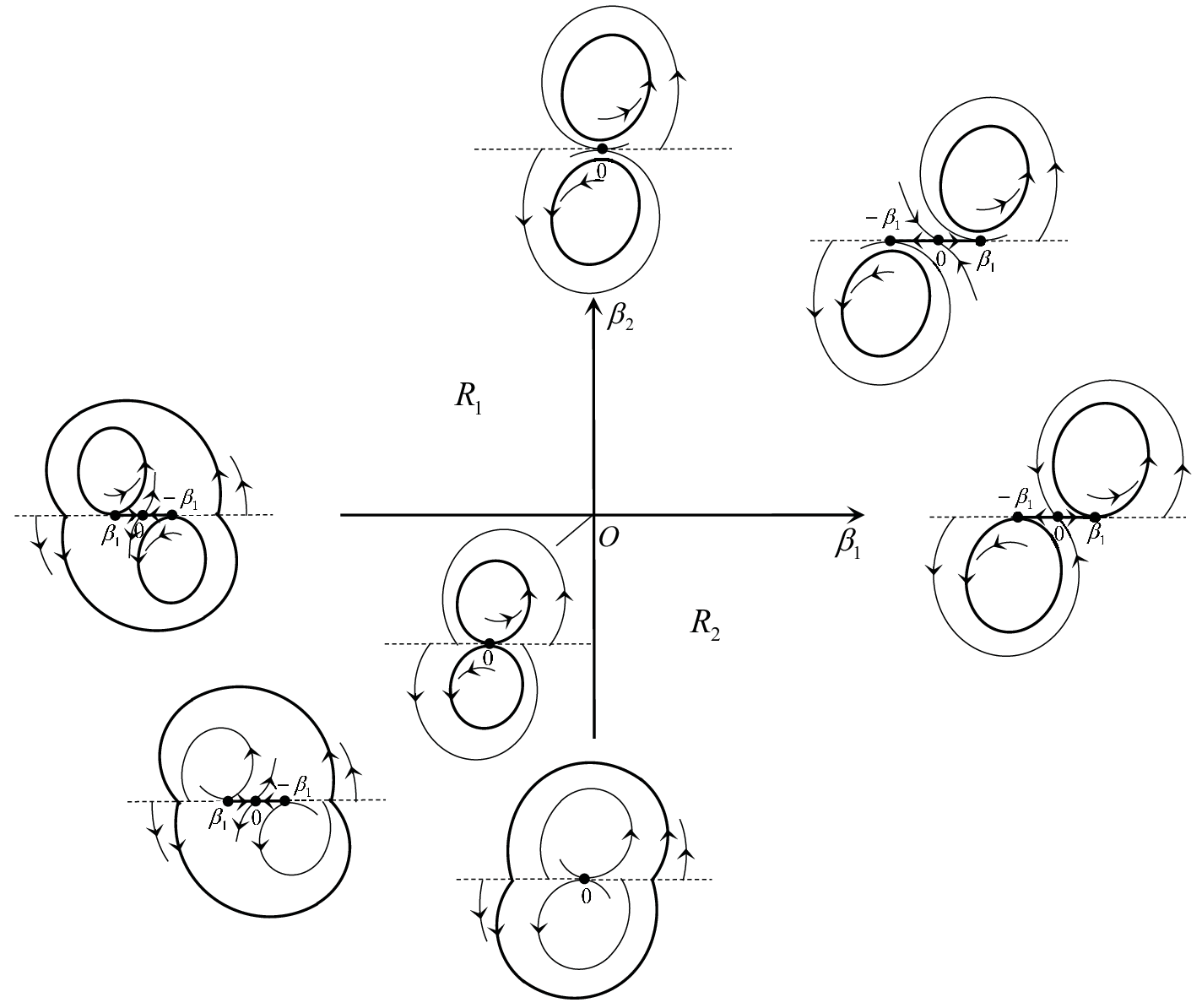}
  \end{minipage}
\caption{{\small Grazing-sliding bifurcation in $\mathbb{Z}_2$-symmetric Filippov systems except regions $R_1, R_2$.}}
\label{symGSB}
\end{figure}

\begin{figure}
\centering{}
\subfigure[{\small Bifurcations in the region $R_1$}]{
	\scalebox{0.55}[0.55]{
	\includegraphics{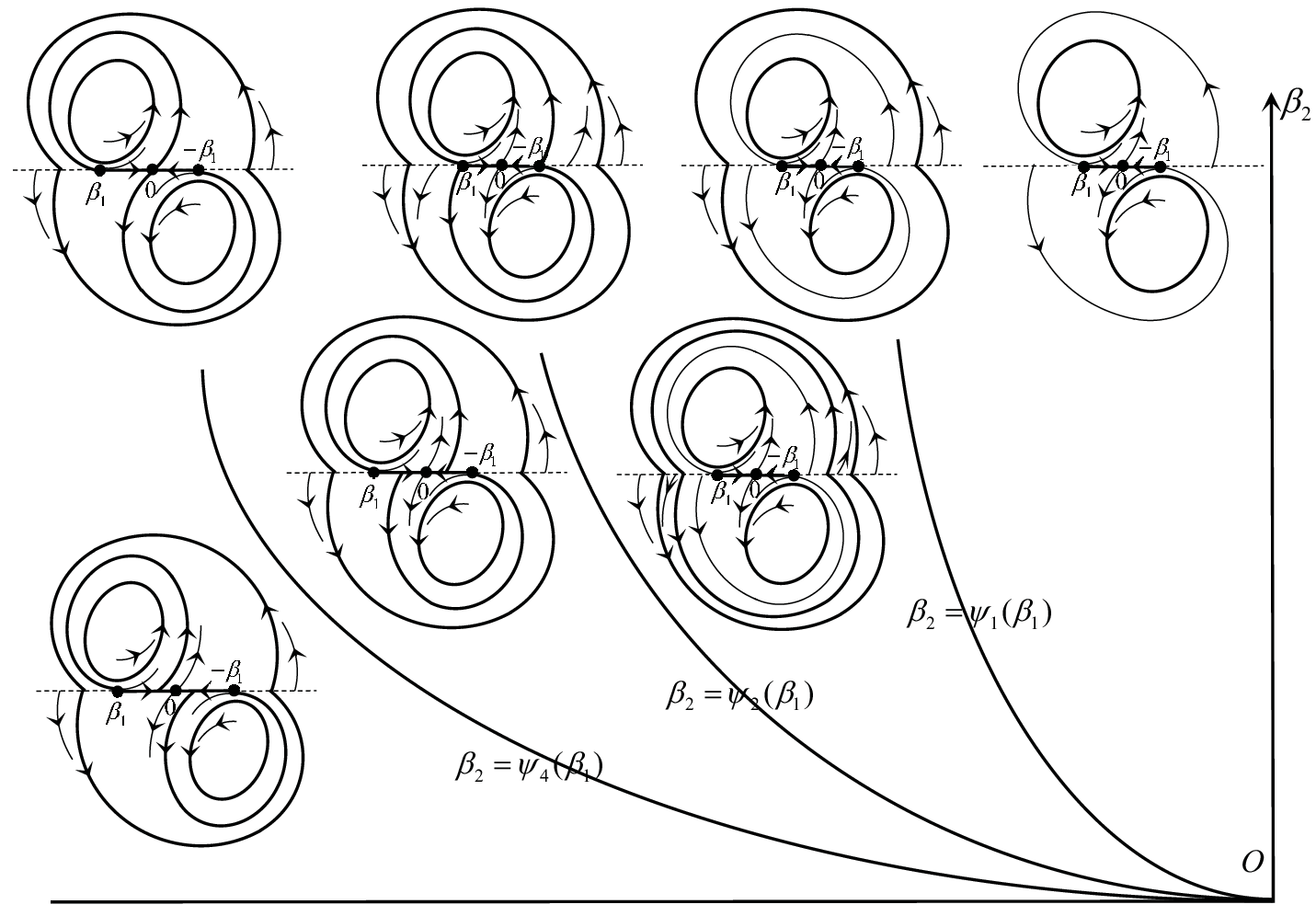}
	}
}
\subfigure[{\small Bifurcations in the region $R_2$}]{
	\scalebox{0.55}[0.55]{
	\includegraphics{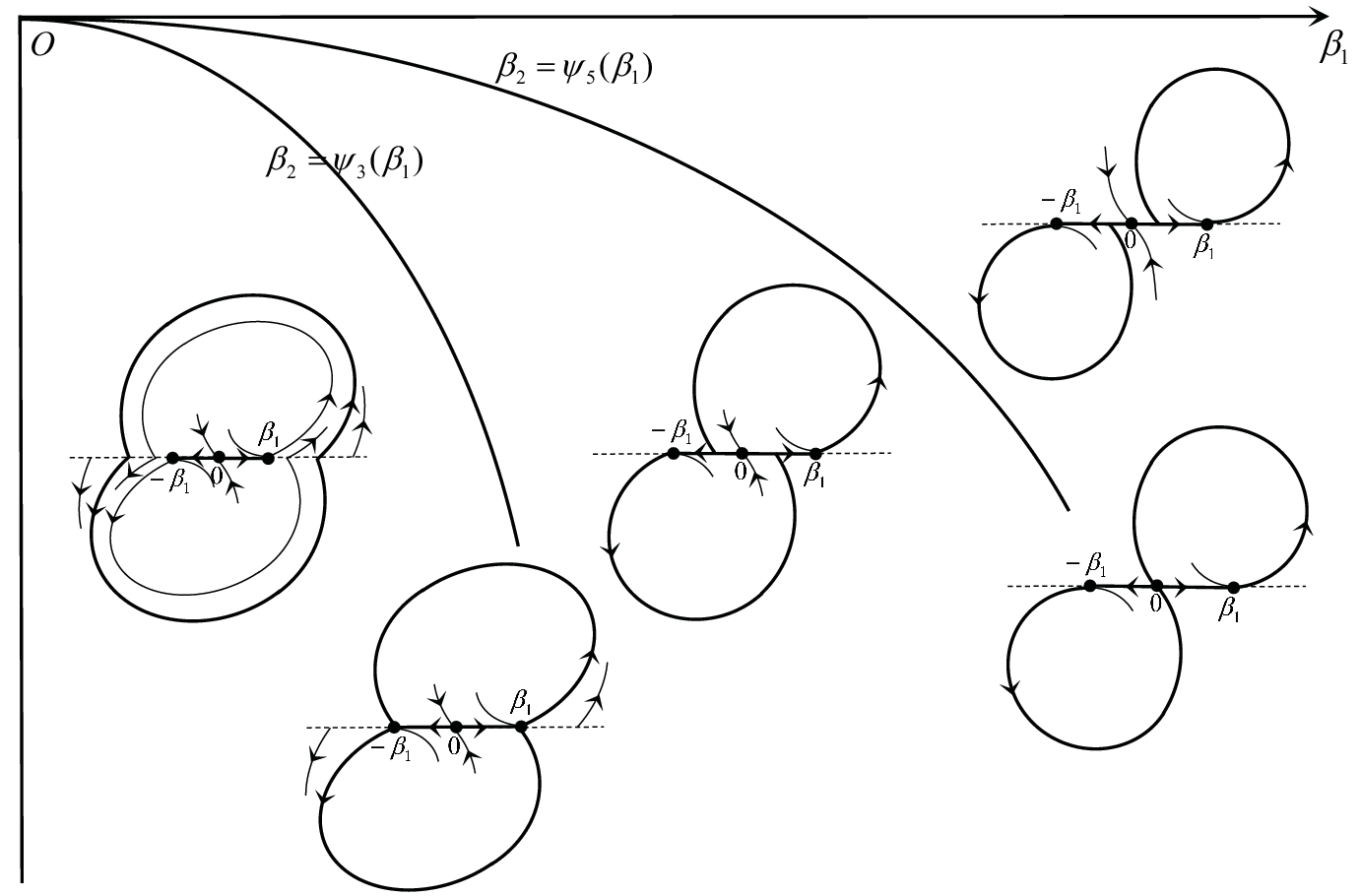}
	}
}
\caption{\footnotesize {\small Bifurcations in the regions $R_1$ and $R_2$}. }
\label{symGSBR1}
\end{figure}

\section{Preliminary lemmas}
\setcounter{equation}{0}
\setcounter{lm}{0}
\setcounter{thm}{0}
\setcounter{rmk}{0}
\setcounter{df}{0}
\setcounter{cor}{0}

This section establishes two preliminary lemmas that underpin the proofs of main theorems.

\begin{lm}\label{slidynamics}
Under the assumption of {\bf(H1)}, for a sufficiently small annulus neighborhood $\mathcal{A}^+$ of $\Gamma_0$ there exists a neighborhood $\mathcal{U}_1^+\subset\mathfrak{X}$ of $Z_0^+$ and a $C^k$ map $\varphi_1(Z^+)$ with $\varphi_1(Z^+_0)=0$ defined in $\mathcal{U}_1^+$ such that each $Z^+\in\mathcal{U}^+_1$ has a unique tangent point in $\mathcal{A}^+$, which lies at $(\varphi_1(Z^+),0)$ and is a fold satisfying
\begin{equation}\label{sdkg}
g^+(\varphi_1(Z^+),0)=0,\qquad f^+(\varphi_1(Z^+),0)>0,\qquad g^+_x(\varphi_1(Z^+),0)>0,
\end{equation}
where $(f^+,g^+)$ is the coordinates of $Z^+$.
\end{lm}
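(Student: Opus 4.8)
The plan is to identify a fold of $Z^+$ on $\Sigma=\{y=0\}$ with a zero of the tangency functional $g^+(\cdot,0)$ and to continue this zero as $Z^+$ varies by means of the implicit function theorem in the Banach space $\mathfrak{X}$. Concretely, I would introduce the evaluation map
\[
\Phi:\mathbb{R}\times\mathfrak{X}\to\mathbb{R},\qquad \Phi(x,Z^+):=g^+(x,0),
\]
where $g^+$ is the second coordinate of $Z^+$. Since $Z^+\mapsto g^+$ is a bounded linear projection and $k\ge2$, the map $(x,g^+)\mapsto g^+(x,0)$ is jointly $C^2$ (evaluation of a $C^2$ function and of its first two $x$-derivatives is controlled by the $C^k$-norm on $\mathfrak{X}$), so $\Phi$ is of class $C^2$. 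By \textbf{(H1)} we have the base-point data $\Phi(0,Z_0^+)=g_0^+(0,0)=0$ and $\partial_x\Phi(0,Z_0^+)=g^+_{0x}(0,0)>0$.

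With these two facts in hand I would apply the Banach-space implicit function theorem at $(0,Z_0^+)$: it yields a neighborhood $\mathcal{U}_1^+\subset\mathfrak{X}$ of $Z_0^+$ and a $C^2$ map $\varphi_1:\mathcal{U}_1^+\to\mathbb{R}$ with $\varphi_1(Z_0^+)=0$ and $\Phi(\varphi_1(Z^+),Z^+)=g^+(\varphi_1(Z^+),0)=0$ for every $Z^+\in\mathcal{U}_1^+$. This is precisely the first identity of $(\ref{sdkg})$, exhibiting $(\varphi_1(Z^+),0)$ as a tangent point of $Z^+$ on $\Sigma$. The two remaining inequalities in $(\ref{sdkg})$ then follow by continuity: the quantities $f^+_0(0,0)>0$ and $g^+_{0x}(0,0)>0$ are strict at the base point, while $Z^+\mapsto f^+(\varphi_1(Z^+),0)$ and $Z^+\mapsto g^+_x(\varphi_1(Z^+),0)$ are continuous on $\mathcal{U}_1^+$ (using $k\ge2$ for the latter), so after shrinking $\mathcal{U}_1^+$ both stay positive. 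In particular $(Z^+)^2h=f^+g^+_x>0$ at $(\varphi_1(Z^+),0)$, confirming that the tangent point is a genuine visible fold rather than an equilibrium.

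It remains to promote the \emph{local} uniqueness from the implicit function theorem to uniqueness within all of $\mathcal{A}^+$. Here I would exploit the geometry furnished by \textbf{(H1)}: $\Gamma_0$ meets $\Sigma$ at the single point $O$, so for a sufficiently small annulus neighborhood $\mathcal{A}^+$ the slice $\mathcal{A}^+\cap\Sigma$ is a short interval $I$ about $x=0$. Shrinking $\mathcal{A}^+$, and then $\mathcal{U}_1^+$, so that $g^+_x(\cdot,0)>0$ throughout $I$, the restriction $x\mapsto g^+(x,0)$ is strictly increasing on $I$ and therefore has at most one zero there, which must coincide with $x=\varphi_1(Z^+)$; since $f^+>0$ on $I$, no such zero can be an equilibrium. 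This delivers existence and uniqueness of the fold in $\mathcal{A}^+$.

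The only genuinely delicate step is the first one, namely verifying that $\Phi$ is $C^2$ as a function of the infinite-dimensional argument $Z^+$ and that the implicit function inherits the asserted $C^2$ regularity; once the transversality datum $g^+_{0x}(0,0)\neq0$ is paired with the hypotheses of the Banach-space implicit function theorem, the remaining work is routine continuity and monotonicity. I would therefore expect the regularity bookkeeping for $\Phi$ to be the main obstacle, with the persistence of the sign conditions and the global uniqueness inside $\mathcal{A}^+$ being comparatively straightforward.
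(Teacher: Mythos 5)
Your proposal is correct and follows essentially the same route as the paper: both apply the (Banach-space) implicit function theorem to the evaluation functional $\mathcal{F}(Z^+,p)=g^+(p,0)$ at $(Z_0^+,0)$, using $g^+_{0x}(0,0)>0$ from \textbf{(H1)}, and then obtain the sign conditions in $(\ref{sdkg})$ by continuity after shrinking the neighborhood. If anything, your monotonicity argument ($g^+_x(\cdot,0)>0$ on $\mathcal{A}^+\cap\Sigma$ forces at most one zero of $g^+(\cdot,0)$ there) makes the passage from the local uniqueness of the implicit function theorem to uniqueness of the fold in all of $\mathcal{A}^+$ more explicit than the paper's one-line appeal to ``the uniqueness of $\varphi_1(Z^+)$.''
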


\begin{proof}
Consider the Fr\'echet differentiable map
$$\mathcal{F}(Z^+,p):=g^+(p,0)$$
for $Z^+\in\mathfrak{X}$ and $p\in\mathbb{R}$. From (\ref{cewr343}),
$$
\mathcal{F}(Z^+_0,0)=g^+_0(0,0)=0,\qquad \frac{\partial\mathcal{F}(Z^+_0,0)}{\partial p}=g^+_{0x}(0,0)>0.
$$
Thus, by the Implicit Function Theorem \cite{KCCM}, there exist a neighborhood $\mathcal{U}_1^+$ of $Z_0^+$ and a unique and $C^k$ map $p=\varphi_1(Z^+)$ defined in $ \mathcal{U}^+_1$ such that
$$
\varphi_1(Z_0^+)=0,\qquad \mathcal{F}(Z^+,\varphi_1(Z^+))=g^+(\varphi_1(Z^+),0)=0,
$$
i.e., $(\varphi_1(Z^+),0)$ is the unique tangent point of $Z^+\in\mathcal{U}^+_1$ in $\mathcal{A}^+$.
In addition, due to $f^+_0(0,0)>0$ and $g^+_{0x}(0,0)>0$ in (\ref{cewr343}), $\mathcal{U}_1^+$ can be chosen to ensure that
$$
f^+(\varphi_1(Z^+),0)>0,\quad \quad g^+_x(\varphi_1(Z^+),0)>0.
$$
This implies that $(\varphi_1(Z^+),0)$ is a fold of $Z^+\in\mathcal{U}^+_1$ satisfying (\ref{sdkg}). Hence, the proof is completed.
\end{proof}

Note that the limit cycle $\Gamma_0$ of $Z_0^+$ is assumed to be hyperbolic in {\bf(H1)}. This means that each vector field in a small neighborhood of $Z_0^+$ always has a hyperbolic limit cycle preserving the stability by the bifurcation theory of smooth dynamical systems. The following lemma characterizes the positional relationship of the limit cycle and $\Sigma$.

\begin{lm}\label{csin34f}
Let $\mathcal{U}_1^+$ be given in Lemma~\ref{slidynamics}. Under the assumption of {\bf(H1)}, for a sufficiently small annulus neighborhood $\mathcal{A}^+$ of $\Gamma_0$ there exists a neighborhood $\mathcal{U}_2^+\subset\mathcal{U}_1^+$ of $Z_0^+$ and a $C^k$ map $\varphi_2(Z^+)$ defined in $\mathcal{U}_2^+$ such that each $Z^+\in\mathcal{U}_2^+$ has a unique limit cycle $\Gamma_{Z^+}$ in $\mathcal{A}^+$, which passes through $(\varphi_1(Z^+),\varphi_2(Z^+))$ and has the same hyperbolicity and stability as $\Gamma_0$. Moreover, $\Gamma_{Z^+}$ has exactly zero $($ resp. one, two$)$ intersections with $\Sigma$ if $\varphi_2(Z^+)>0$ $($resp. $=0, <0$$)$.
\end{lm}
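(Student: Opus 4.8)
My plan is to reduce the lemma to a statement about the single smooth vector field $Z^+$ and its hyperbolic limit cycle, and then to read off the number of intersections with $\Sigma=\{y=0\}$ from one scalar quantity measured at the fold produced by Lemma~\ref{slidynamics}.

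First I would construct $\Gamma_{Z^+}$ and the function $\varphi_2$ simultaneously. Since $\Gamma_0$ is a hyperbolic limit cycle of $Z_0^+$, the bifurcation theory of smooth systems (as noted after Lemma~\ref{slidynamics}) provides a unique limit cycle $\Gamma_{Z^+}$ in $\mathcal{A}^+$ that is $C^0$-close to $\Gamma_0$ and inherits its hyperbolicity and stability. To pin down its position I would use the vertical line $\ell_{Z^+}:=\{x=\varphi_1(Z^+)\}$ through the fold as a Poincar\'e section: by (\ref{sdkg}) we have $f^+>0$ and $g^+=0$ at the fold, so $Z^+$ is transverse to $\ell_{Z^+}$ there and the first-return map $P_{Z^+}(y)$ along $\ell_{Z^+}$ is well defined and $C^2$ near $y=0$. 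Because $\Gamma_0$ passes through $O$, we have $P_{Z_0^+}(0)=0$ and $P_{Z_0^+}'(0)=\lambda(0)\ne1$; applying the Implicit Function Theorem to $P_{Z^+}(y)-y=0$ then yields a unique $C^2$ function $y=\varphi_2(Z^+)$ with $\varphi_2(Z_0^+)=0$ whose graph is the crossing of $\Gamma_{Z^+}$ with $\ell_{Z^+}$. Thus $\Gamma_{Z^+}$ passes through $(\varphi_1(Z^+),\varphi_2(Z^+))$, and the multiplier $P_{Z^+}'(\varphi_2(Z^+))$ stays near $\lambda(0)$, confirming the claimed hyperbolicity and stability.

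Next I would localize the interaction with $\Sigma$. Because $\Gamma_0\subset\{y\ge0\}$ meets $\Sigma$ only at $O$, there are a neighborhood $W$ of $O$ and $\delta_0>0$ with $y\ge2\delta_0$ on $\Gamma_0\setminus W$; by $C^0$-closeness, $y\ge\delta_0$ on $\Gamma_{Z^+}\setminus W$ for all $Z^+$ in a small enough neighborhood. Hence every intersection of $\Gamma_{Z^+}$ with $\Sigma$, as well as the global minimum $m(Z^+):=\min_{\Gamma_{Z^+}}y$, occurs inside $W$ near the fold. In $W$ the inequality $f^+>0$ lets me write the relevant arc of $\Gamma_{Z^+}$ as a graph $y=Y(x)$ solving $Y'=g^+/f^+$, and differentiating shows that any critical point is a nondegenerate minimum with $Y''=g^+_x/f^+>0$ there; consequently the arc has a single minimum, equal to $m(Z^+)$.

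The heart of the argument, and the step I expect to be the main obstacle, is to match the sign of $\varphi_2(Z^+)$ with that of $m(Z^+)$, since $\varphi_2$ is the height of $\Gamma_{Z^+}$ on $\ell_{Z^+}$ whereas the minimum is attained at a nearby abscissa $x_*\ne\varphi_1(Z^+)$. Expanding $Y$ about its minimum and using $Y''(x_*)=g^+_x/f^+>0$ gives $\varphi_2(Z^+)=m(Z^+)+\tfrac12(g^+_x/f^+)(x_*-\varphi_1(Z^+))^2+\cdots$, while linearizing $g^+(x_*,m)=g^+(\varphi_1(Z^+),0)=0$ at the minimum yields $x_*-\varphi_1(Z^+)=-(g^+_y/g^+_x)\,m(Z^+)=O(m(Z^+))$, so that $\varphi_2(Z^+)=m(Z^+)+O(m(Z^+)^2)$; for $Z^+$ close to $Z_0^+$ both are small, whence $\operatorname{sign}\varphi_2(Z^+)=\operatorname{sign}m(Z^+)$. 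Finally I would characterize the grazing case intrinsically: if $\Gamma_{Z^+}$ is tangent to $\Sigma$, the contact point has horizontal velocity, i.e. $g^+=0$ on $\Sigma$, so it is a tangent point, and since $f^+,g^+_x>0$ there it is a fold, necessarily the unique fold $(\varphi_1(Z^+),0)$ of Lemma~\ref{slidynamics}; hence tangency is equivalent to $m(Z^+)=0$, equivalently to $\varphi_2(Z^+)=0$. Combining these, $\varphi_2(Z^+)>0$ forces $m(Z^+)>0$ and $\Gamma_{Z^+}\subset\{y>0\}$, giving no intersection; $\varphi_2(Z^+)=0$ gives exactly the grazing contact at the fold, one intersection; and $\varphi_2(Z^+)<0$ forces $m(Z^+)<0$, so the single nondegenerate minimum dips below $\Sigma$ and the arc crosses $y=0$ transversally exactly twice, completing the trichotomy.
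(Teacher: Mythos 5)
Your proposal follows essentially the same route as the paper: a Poincar\'e section on the vertical line through the fold obtained in Lemma~\ref{slidynamics} (the paper first translates the fold to the origin via $\mathcal{T}$ and then uses the $y$-axis, which is the same section), the Implicit Function Theorem applied to the fixed-point equation $P(y)=y$ using $\partial P/\partial y\ne1$ from hyperbolicity, and continuity of the multiplier to preserve hyperbolicity and stability. The only difference is that you rigorously justify the intersection trichotomy---via the graph representation $y=Y(x)$ near the fold, the nondegeneracy $Y''=g^+_x/f^+>0$ of the unique minimum $m(Z^+)$, and the sign-matching expansion $\varphi_2(Z^+)=m(Z^+)+\mathcal{O}(m(Z^+)^2)$---a step the paper's proof compresses into ``it is not difficult to see,'' so your argument is correct and in fact supplies the omitted detail.
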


\begin{proof}

By the change
$$
\mathcal{T}: (x,y)\rightarrow(x+\varphi_1(Z^+),y),
$$
$Z^+\in\mathcal{U}_1^+$ is transformed to a new vector field, denoted by $\widehat Z^+:=\mathcal{T}(Z^+)$. In particular, $\widehat Z^+_0:=\mathcal{T}(Z^+_0)=Z^+_0$ due to $\varphi_1(Z^+_0)=0$, as obtained in Lemma~\ref{slidynamics}. In this case, the fold of $Z^+$, namely $(\varphi_1(Z^+),0)$, is translated to $O$, i.e., $O$ is always a fold of $\widehat Z^+\in\mathcal{T}(\mathcal{U}_1^+)$.

Under the assumption of {\bf(H1)}, for a sufficiently small annulus neighborhood $\mathcal{A}^+$ of $\Gamma_0$ there is a neighborhood $\widehat{\mathcal{U}}^+_2\subset \mathcal{T}(\mathcal{U}_1^+)$ of $\widehat Z_0^+$ such that for $\widehat Z^+\in\widehat{\mathcal{U}}^+_2$ we can define a Poincar\'e map $P(y;\widehat Z^+)$ having the $y$-axis near $O$ as the Poincar\'e section by the forward orbits of $\widehat Z^+$. Clearly, $P(0;\widehat Z_0^+)=0$. Moreover, it follows from \cite{LP} and the hyperbolicity of $\Gamma_0$ that $\partial P(0;\widehat Z^+_0)/\partial y\ne1$. Therefore, a direct application of the Implicit Function Theorem yields that $\widehat{\mathcal{U}}_2^+$ can be reduced to get a unique and $C^k$ map $\widehat\varphi_2(\widehat Z^+)$ defined in $\widehat{\mathcal{U}}_2^+$ such that $\widehat\varphi_2(\widehat Z^+_0)=0$, $P(\widehat\varphi_2(\widehat Z^+),\widehat Z^+)=\widehat\varphi_2(\widehat Z^+)$ and $(\partial P(\widehat\varphi_2(\widehat Z^+);\widehat Z^+)/\partial y-1)(\partial P(0;\widehat Z^+_0)/\partial y-1)>0$. This means that the orbit of $\widehat Z^+$ passing through $(0,\widehat\varphi_2(\widehat Z^+))$ is a limit cycle, which is hyperbolic and has the same stability as $\Gamma_0$. It follows from the uniqueness of $\widehat\varphi_2(\widehat Z^+)$ that the limit cycle of $\widehat Z^+$ in $\mathcal{A}^+$ is unique. Moreover, since $O$ is always a fold of $\widehat Z^+$, it is not difficult to see that the limit cycle has exactly zero $($ resp. one, two$)$ intersections with $\Sigma$ if $\widehat\varphi_2(\widehat Z^+)>0$ $($resp. $=0, <0$$)$.

Finally, letting $\mathcal{U}_2^+=\mathcal{T}^{-1}(\widehat{\mathcal{U}}^+_2)$ and $\varphi_2(Z^+)=\widehat\varphi_2(\mathcal{T}(Z^+))$,
we complete the proof of lemma.
\end{proof}

\section{Proof of Theorem~\ref{thm-codim}}
\setcounter{equation}{0}
\setcounter{lm}{0}
\setcounter{thm}{0}
\setcounter{rmk}{0}
\setcounter{df}{0}
\setcounter{cor}{0}

To prove Theorem~\ref{thm-codim}, we need the following lemma.

\begin{lm}\label{co123}
Let $\varphi_1(Z^+)$ $($resp. $\varphi_2(Z^+)$$)$ be the map defined in $\mathcal{U}_1^+$ $($resp. $\mathcal{U}_2^+\subset\mathcal{U}_1^+$$)$ and obtained in Lemma~\ref{slidynamics} $($resp. Lemma~\ref{csin34f}$)$. Given $Z^+\in\mathcal{U}_2^+$, for any $(c_1,c_2)\in\mathbb{R}^2$ there is a constant $\varepsilon_0>0$ and a smooth curve $\ell(\varepsilon):(-\varepsilon_0,\varepsilon_0)\rightarrow\mathcal{U}^+_2$ such that $\ell(0)=Z^+$ and
$$(\varphi_1(\ell(\varepsilon)),\varphi_2(\ell(\varepsilon)))=(\varphi_1(Z^+), \varphi_2(Z^+))+(c_1,c_2)\varepsilon+\mathcal{O}(\varepsilon^2).$$
\end{lm}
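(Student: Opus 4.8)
The plan is to establish that the Fr\'echet differential of the pair $\Phi:=(\varphi_1,\varphi_2):\mathcal{U}_2^+\to\mathbb{R}^2$ at the prescribed $Z^+$ is surjective onto $\mathbb{R}^2$. Granting this, for any $(c_1,c_2)\in\mathbb{R}^2$ one selects a direction $V\in\mathfrak{X}$ with $D\Phi(Z^+)\cdot V=(c_1,c_2)$ and takes the affine curve $\ell(\varepsilon):=Z^++\varepsilon V$, which remains in the open set $\mathcal{U}_2^+$ for $|\varepsilon|<\varepsilon_0$ with $\varepsilon_0$ small; since $\Phi$ is $C^2$, Taylor's theorem yields $\Phi(\ell(\varepsilon))=\Phi(Z^+)+(c_1,c_2)\varepsilon+\mathcal{O}(\varepsilon^2)$, which is exactly the claim. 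As $\mathfrak{X}$ is a linear space, surjectivity reduces to producing two perturbation directions whose images under $D\Phi(Z^+)$ are linearly independent.

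I would first compute the two components of $D\Phi(Z^+)$ acting on a direction $V=(\tilde f,\tilde g)\in\mathfrak{X}$. Differentiating the identity $g^+(\varphi_1(Z^+),0)=0$ from Lemma~\ref{slidynamics} gives
\[
D\varphi_1(Z^+)\cdot V=-\frac{\tilde g(\varphi_1(Z^+),0)}{g^+_x(\varphi_1(Z^+),0)},
\]
which is meaningful because $g^+_x(\varphi_1(Z^+),0)>0$. For $\varphi_2$, I would exploit its construction in Lemma~\ref{csin34f} as the fixed-point coordinate of the Poincar\'e map $P(\cdot;\widehat Z^+)$ on the $y$-axis after the translation $\mathcal{T}$; differentiating the fixed-point relation $P(\varphi_2;\widehat Z^+)=\varphi_2$ and using $\partial P/\partial y\ne1$ identifies $D\varphi_2(Z^+)\cdot V$ with a Melnikov-type displacement integral along $\Gamma_{Z^+}$, proportional to an expression of the form $\int_0^{T_0}\lambda(t)\,(f^+\tilde g-g^+\tilde f)(\gamma_0(t))\,dt$ up to a positive normalizing constant, in the same spirit as the quantities $\kappa_i$ in Theorem~\ref{mainthm0}.

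Finally I would exhibit two witnessing directions. Let $V_a=(\tilde f_a,\tilde g_a)$ be a smooth field supported in a small disk around a point of $\Gamma_{Z^+}$ lying strictly off $\Sigma$ and chosen transverse to the flow there, so that the displacement integral is of a definite sign; since its support avoids the fold, $\tilde g_a(\varphi_1(Z^+),0)=0$, whence $D\varphi_1(Z^+)\cdot V_a=0$ while $D\varphi_2(Z^+)\cdot V_a\ne0$. Let $V_b=(0,\tilde g_b)$ with $\tilde g_b$ a bump supported near the fold $(\varphi_1(Z^+),0)$ satisfying $\tilde g_b(\varphi_1(Z^+),0)\ne0$, so that $D\varphi_1(Z^+)\cdot V_b\ne0$. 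The corresponding Jacobian
\[
\begin{pmatrix} D\varphi_1\cdot V_a & D\varphi_1\cdot V_b\\ D\varphi_2\cdot V_a & D\varphi_2\cdot V_b\end{pmatrix}=\begin{pmatrix} 0 & \ne0\\ \ne0 & *\end{pmatrix}
\]
has nonzero determinant, so $D\Phi(Z^+)$ is surjective and the proof concludes. The main obstacle I anticipate is the rigorous justification of the displacement formula for $D\varphi_2$ together with the non-degeneracy that a perturbation supported off $\Sigma$ actually renders this integral nonzero; one must also track how the translation $\mathcal{T}$, which depends on $\varphi_1$, enters $D\varphi_2$, although for $V_a$ this contribution drops out to first order precisely because $D\varphi_1(Z^+)\cdot V_a=0$.
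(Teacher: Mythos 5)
Your proposal is correct, and its analytic core coincides with the paper's: both arguments come down to producing, for the given $(c_1,c_2)$, a direction $V\in\mathfrak{X}$ with $D(\varphi_1,\varphi_2)(Z^+)\,V=(c_1,c_2)$ and then taking the affine curve $\ell(\varepsilon)=Z^++\varepsilon V$, the two derivative formulas being obtained exactly as you describe --- implicit differentiation of $g^+(\varphi_1(Z^+),0)=0$ at the fold, and the parametric-displacement (Perko/Melnikov) formula for the fixed point of the Poincar\'e map, with the correction coming from the $\varphi_1$-dependent translation of the section. Where you genuinely differ is in how the direction is produced. The paper is constructive: it takes the single global ansatz $V=\bigl(c\,g^+,\,-g^+_x(\varphi_1(Z^+),0)\,c_1\bigr)$, whose second (constant) component gives $D\varphi_1\cdot V=c_1$ on the nose, and whose first component contributes the sign-definite Melnikov integrand $-c\,(g^+)^2$, so the scalar $c$ can be solved in closed form (formula (\ref{oiajfca343})) to force $D\varphi_2\cdot V=c_2$; no bump functions and no non-cancellation discussion are needed because $\int_0^{T_{Z^+}}\lambda_{Z^+}(t)(g^+)^2(\gamma_{Z^+}(t))\,dt>0$ automatically. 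You instead prove surjectivity of the differential via two localized bump fields (one along the cycle off $\Sigma$, one near the fold) plus linear algebra, and then pick any preimage; this is the more standard transversality-style argument and requires no ansatz, at the price of justifying the localized integral's sign. On that point, transversality of $V_a$ to the flow at a single point is not by itself enough: you should shrink the support so that $Z^+$ is nearly constant on it and take $V_a$ equal to a nonnegative bump times a fixed vector $w$ with $\det\bigl(Z^+(\gamma_{Z^+}(t_0)),w\bigr)>0$, making the integrand $f^+\tilde g_a-g^+\tilde f_a$ nonnegative and somewhere positive. Two further minor corrections: the normalizing factor in $D\varphi_2$ involves $1/\bigl(\lambda_{Z^+}(0)-1\bigr)$, which is nonzero by hyperbolicity but not necessarily positive (harmless for surjectivity, but ``positive normalizing constant'' is inaccurate), and your displacement integral should be written along $\gamma_{Z^+}$ over $[0,T_{Z^+}]$ with weight $\lambda_{Z^+}$, not along the unperturbed data $\gamma_0$, $T_0$, $\lambda$, since the lemma concerns an arbitrary $Z^+\in\mathcal{U}_2^+$.
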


\begin{proof}
Given $Z^+=(f^+,g^+)\in\mathcal{U}^+_2$, for any $(c_1,c_2)\in\mathbb{R}^2$ we consider $Z^+_\varepsilon=(X(x,y;\varepsilon), Y(x,y;\varepsilon))$ with
\begin{equation}\label{53fsfvfD}
X(x,y;\varepsilon):=f^+(x,y)+\varepsilon cg^+(x,y),\qquad Y(x,y;\varepsilon):=g^+(x,y)-\varepsilon c_1g^+_x(\varphi_1(Z^+),0),
\end{equation}
where
\begin{equation}\label{oiajfca343}
\begin{aligned}
c:=&c_1\frac{\int^{T_{Z^+}}_0\lambda_{Z^+}(t)(f^+g^+_x-g^+f^+_x)(\gamma_{Z^+}(t))dt-
g^+_x(\varphi_1(Z^+),0)\int^{T_{Z^+}}_0\lambda_{Z^+}(t)f^+(\gamma_{Z^+}(t))dt}{\int^{T_{Z^+}}_0\lambda_{Z^+}(t)(g^+)^2(\gamma_{Z^+}(t))dt}\\
&+c_2\frac{\left(\lambda_{Z^+}(0)-1\right)f^+(\varphi_1(Z^+), \varphi_2(Z^+))}{\int^{T_{Z^+}}_0\lambda_{Z^+}(t)(g^+)^2(\gamma_{Z^+}(t))dt},\\
\lambda_{Z^+}(t):=&\exp\int_t^{T_{Z^+}}(f^+_x+g^+_y)(\gamma_{Z^+}(s))ds,
\end{aligned}
\end{equation}
$\gamma_{Z^+}(t)$ is the coordinates of the limit cycle $\Gamma_{Z^+}$ of $Z^+$ with $\gamma_{Z^+}(0)=(\varphi_1(Z^+),\varphi_2(Z^+))$ and $T_{Z^+}$ is the minimal positive period of $\Gamma_{Z^+}$. Here the existence of $\Gamma_{Z^+}$ is obtained in Lemma~\ref{csin34f}.
Clearly, there is a constant $\varepsilon_1>0$ such that $Z^+_\varepsilon\in\mathcal{U}^+_2$ for $\varepsilon\in(-\varepsilon_1,\varepsilon_1)$. This allows us to compute $\varphi_1(Z^+_\varepsilon)$ and $\varphi_2(Z^+_\varepsilon)$ as follows.

From (\ref{sdkg}),
$$
\begin{aligned}
&X(\varphi_1(Z^+),0;0)=f^+(\varphi_1(Z^+),0)>0,\qquad Y(\varphi_1(Z^+),0;0)=g^+(\varphi_1(Z^+),0)=0,\\
&Y_x(\varphi_1(Z^+),0;0)=g^+_x(\varphi_1(Z^+),0)>0.
\end{aligned}
$$
By the Implicit Function Theorem and the sign-preserving property of continuous functions, there is a constant $\varepsilon_2\in(0,\varepsilon_1)$ and a smooth map $\phi_1(\varepsilon)$ defined in $(-\varepsilon_2,\varepsilon_2)$ such that $\phi_1(0)=\varphi_1(Z^+)$ and
$$X(\phi_1(\varepsilon),0;\varepsilon)>0,\qquad Y(\phi_1(\varepsilon),0;\varepsilon)=0,\qquad Y_x(\phi_1(\varepsilon),0;\varepsilon)>0.$$
This together with Lemma~\ref{slidynamics} means that
$$\varphi_1(Z^+_\varepsilon)=\phi_1(\varepsilon),\qquad \varepsilon\in(-\varepsilon_2,\varepsilon_2).$$
Therefore, due to $\phi'_1(0)=c_1$,
we can write $\varphi_1(Z^+_\varepsilon)$ in the form
\begin{equation}\label{cgsks43}
\varphi_1(Z^+_\varepsilon)=\varphi_1(Z^+)+c_1\varepsilon+\mathcal{O}(\varepsilon^2).
\end{equation}

On the other hand, by the linear change
\begin{equation}\label{nfe34fe}
(x,y)\rightarrow(x+\varphi_1(Z^+_\varepsilon)-\varphi_1(Z^+),y)
\end{equation}
we transform $Z^+_\varepsilon=(X(x,y;\varepsilon), Y(x,y;\varepsilon))$ to $\widehat Z^+_\varepsilon=(\widehat X(x,y;\varepsilon),\widehat Y(x,y;\varepsilon))$ with
\begin{equation}\label{ckd34fsr}
\widehat X(x,y;\varepsilon):=X(x+\varphi_1(Z^+_\varepsilon)-\varphi_1(Z^+),y;\varepsilon),\qquad \widehat Y(x,y;\varepsilon):=Y(x+\varphi_1(Z^+_\varepsilon)-\varphi_1(Z^+),y;\varepsilon).
\end{equation}
Then for $\widehat Z^+_\varepsilon$ we can define a Poincar\'e map $P(y;\varepsilon)$ around the limit cycle $\Gamma_{Z^+}$ by choosing $x=\varphi_1(Z^+)$ near $(\varphi_1(Z^+), \varphi_2(Z^+))$ as the Poincar\'e section. Since
$$P(\varphi_2(Z^+);0)=\varphi_2(Z^+),\qquad \frac{\partial P(\varphi_2(Z^+);0)}{\partial y}\ne1,$$
there is a constant $\varepsilon_3\in(0,\varepsilon_2)$ and a smooth map $\phi_2(\varepsilon)$ defined in $(-\varepsilon_3,\varepsilon_3)$ such that $\phi_2(0)=\varphi_2(Z^+)$ and $P(\phi_2(\varepsilon),\varepsilon)=\phi_2(\varepsilon)$. This means that the orbit of $\widehat Z^+_\varepsilon$ passing through $(\varphi_1(Z^+),\phi_2(\varepsilon))$ is a limit cycle. Thus the orbit of $Z^+_\varepsilon$ passing through $(\varphi_1(Z^+_\varepsilon),\phi_2(\varepsilon))$ is a limit cycle by (\ref{nfe34fe}). Besides, it follows from  $Z^+_\varepsilon\in\mathcal{U}^+_2$ and Lemma~\ref{csin34f} that $Z^+_\varepsilon$ has a unique limit cycle in $\mathcal{A}^+$, which passes through $(\varphi_1(Z^+_\varepsilon),\varphi_2(Z^+_\varepsilon))$. Therefore,
\begin{equation}\label{nfe34feafaf}
\varphi_2(Z^+_\varepsilon)=\phi_2(\varepsilon),\qquad \varepsilon\in(-\varepsilon_3,\varepsilon_3).
\end{equation}

Furthermore, it follows from \cite[p.384]{AAA}, (\ref{53fsfvfD}), (\ref{oiajfca343}) and (\ref{ckd34fsr}) that
\begin{equation}\label{oiajf3}
\frac{\partial P(\varphi_2(Z^+);0)}{\partial y}=\exp\int_0^{T_{Z^+}}(\widehat X_x+\widehat Y_y)(\gamma_{Z^+}(s);0)ds=\lambda_{Z^+}(0)
\end{equation}
and
\begin{equation}\label{oiajafaf3}
\begin{aligned}
\frac{\partial P(\varphi_2(Z^+);0)}{\partial\varepsilon}=&~\frac{1}{\widehat X(\varphi_1(Z^+), \varphi_2(Z^+);0)}\int^{T_{Z^+}}_0\!\!\!\Bigg\{\!\exp\!\int_t^{T_{Z^+}}\!\!(\widehat X_x+\widehat Y_y)(\gamma_{Z^+}(s);0)ds\left(\widehat X\widehat Y_\varepsilon-\widehat Y\widehat X_\varepsilon\right)(\gamma_{Z^+}(t);0)\!\Bigg\}dt\\
=&~\frac{1}{f^+(\varphi_1(Z^+), \varphi_2(Z^+))}\Bigg\{c_1\int^{T_{Z^+}}_0\lambda_{Z^+}(t)(f^+g^+_x-g^+f^+_x)(\gamma_{Z^+}(t))dt+\\
&\int^{T_{Z^+}}_0\lambda_{Z^+}(t)\left(-g^+_x(\varphi_1(Z^+),0)c_1f^+(\gamma_{Z^+}(t))-c(g^+)^2(\gamma_{Z^+}(t))\right)dt\Bigg\}.
\end{aligned}
\end{equation}
Using (\ref{oiajfca343}), (\ref{oiajf3}) and (\ref{oiajafaf3}), we obtain
\begin{equation}\label{oiaafaafajf3}
\begin{aligned}
\phi_2'(0)&=-\frac{\partial P(\varphi_2(Z^+);0)}{\partial\varepsilon}/\left(\frac{\partial P(\varphi_2(Z^+);0)}{\partial y}-1\right)=c_2.
\end{aligned}
\end{equation}
Hence, by (\ref{nfe34feafaf}) and (\ref{oiaafaafajf3}), $\varphi_2(Z^+_\varepsilon)$ can be written in the form
\begin{equation}\label{cgskcafawr}
\varphi_2(Z^+_\varepsilon)=\varphi_2(Z^+)+c_2\varepsilon+\mathcal{O}(\varepsilon^2).
\end{equation}
Finally, taking $\varepsilon_0=\varepsilon_3$ and $\ell(\varepsilon)=Z^+_\varepsilon$, we complete the proof from (\ref{cgsks43}) and (\ref{cgskcafawr}).
\end{proof}

\begin{proof}[{\bf Proof of Theorem~\ref{thm-codim}}]
For a sufficiently small figure eight annulus neighborhood $\mathcal{A}$ of $\Upsilon_0$ we take $\mathcal{U}=\mathcal{U}_2^+\times\mathcal{U}_2^-$, where $\mathcal{U}_2^+$ is the neighborhood of $Z_0^+$ given in Lemma~\ref{csin34f} and $\mathcal{U}_2^-$ is the neighborhood of $Z_0^-$ such that the $\mathbb{Z}_2$-symmetric counterpart of $Z^-\in\mathcal{U}_2^-$ lies in $\mathcal{U}_2^+$. Then $Z\in\mathcal{U}$ has a figure eight loop characterized by {\bf(H1)} and {\bf (H2)} in $\mathcal{A}$ if and only if $\Lambda(Z)=(0,0)$ by Lemma~\ref{slidynamics} and \ref{csin34f}, where $\Lambda(Z):\mathcal{U}\rightarrow\mathbb{R}^2$ is defined as $\Lambda(Z)=(\varphi_1(Z^+),\varphi_2(Z^+))$. Clearly, $\Lambda$ is $C^k$ because both $\varphi_1(Z^+)$ and $\varphi_2(Z^+)$ are $C^k$.

On the other hand, we show that the derivative $D\Lambda(Z)$ is surjective for each $Z\in\mathcal{U}$. In fact, since $D\Lambda(Z)$ is
a linear map from the tangent space $T_Z(\Omega)$ of $\Omega$ at $Z$ to the tangent space $T_{\Lambda(Z)}(\mathbb{R}^2)$ of $\mathbb{R}^2$ at $\Lambda(Z)$, it suffices to prove that, for each nonzero $(c_1,c_2)\in\mathbb{R}^2$, there is a tangent vector $\zeta$ in $T_Z(\Omega)$ such that $D\Lambda(Z)\zeta=(c_1,c_2)$. By Lemma~\ref{co123} there is a smooth curve $L(\varepsilon):(-\varepsilon_0,\varepsilon_0)\rightarrow\Omega$ satisfying $L(0)=Z$ and
$$\Lambda(L(\varepsilon))=\Lambda(Z)+(c_1,c_2)\varepsilon+\mathcal{O}(\varepsilon^2).$$
Thus
\begin{equation}\label{cjkg32}
\left.\frac{d\Lambda(L(\varepsilon))}{d\varepsilon}\right|_{\varepsilon=0}=\lim_{\varepsilon\rightarrow0}\frac{\Lambda( L(\varepsilon))-\Lambda(Z)}{\varepsilon}=\lim_{\varepsilon\rightarrow0}\frac{(c_1,c_2)\varepsilon+\mathcal{O}(\varepsilon^2)}{\varepsilon}=(c_1,c_2).
\end{equation}
Besides,
\begin{equation}\label{cjkg32casf}
\left.\frac{d\Lambda(L(\varepsilon))}{d\varepsilon}\right|_{\varepsilon=0}=D\Lambda(Z)L'(0),
\end{equation}
where $L'(0)$ is the derivative of $L(\varepsilon)$ at $\varepsilon=0$. By (\ref{cjkg32}) and (\ref{cjkg32casf}) we have $D\Lambda(Z)L'(0)=(c_1,c_2)$, which implies that we can take $\zeta=L'(0)$ and then get that the derivative $D\Lambda(Z)$ is surjective for each $Z\in\mathcal{U}$.

Finally, by the above arguments and \cite[p.22]{HFFFZ}, $\Lambda^{-1}(0,0)$, i.e., the set $\mathcal{U}_0$ of all vector fields of $\mathcal{U}$ having a figure eight loop characterized by {\bf(H1)} and {\bf (H2)} in $\mathcal{A}$, is a codimension-2 $C^k$ submanifold of $\mathcal{U}$.
\end{proof}

\section{Proof of Theorem \ref{mainthm}}
\setcounter{equation}{0}
\setcounter{lm}{0}
\setcounter{thm}{0}
\setcounter{rmk}{0}
\setcounter{df}{0}
\setcounter{cor}{0}

To prove Theorem \ref{mainthm}, we give some preliminary lemmas in the following.

\begin{lm}\label{cewr343787}
Consider the maps $\varphi_1(Z^+), \varphi_2(Z^+):\mathcal{U}_2^+\rightarrow\mathbb{R}$ obtained in Lemmas~\ref{slidynamics}, \ref{csin34f} respectively, and the vector field $Z^+(x,y;\alpha)$ given in $(\ref{parasys})$ for $\alpha\in U$.
If $\kappa_1g^+_{\alpha_2}(0,0;0)-\kappa_2g^+_{\alpha_1}(0,0;0)\ne0$, then there exists a neighborhood $U_1\subset U$ of $\alpha=0$ such that
\begin{equation}\label{csk3434}
\beta=(\beta_1,\beta_2)=(\varphi_1(\alpha),\varphi_2(\alpha)):=(\varphi_1(Z^+(x,y;\alpha)),\varphi_2(Z^+(x,y;\alpha)))
\end{equation}
is a local diffeomorphism from $U_1$ to its range $V_1$.
\end{lm}

\begin{proof}
By (\ref{csk3434}) and the definition of $\varphi_1(Z^+)$, $\varphi_1(\alpha)$ satisfies $\varphi_1(0)=0$ and $g^+(\varphi_1(\alpha),0;\alpha)=0$. Thus, applying the Implicit Function Theorem to the equation $g^+(x, 0; \alpha)=0$, we can write $\varphi_1(\alpha)$ around $\alpha=0$ as
\begin{equation}\label{cjsdser}
\begin{aligned}
\varphi_1(\alpha)=-\frac{g^+_{\alpha_1}(0,0;0)}{g^+_x(0,0;0)}\alpha_1-\frac{g^+_{\alpha_2}(0,0;0)}{g^+_x(0,0;0)}\alpha_2+\mathcal{O}(\|\alpha\|^2).
\end{aligned}
\end{equation}

On the other hand, by the linear change of variables
$$
(x,y)\rightarrow(x+\varphi_1(\alpha),y),
$$
we can transform $Z^+(x,y;\alpha)=(f^+(x,y;\alpha),g^+(x,y;\alpha))$ to a new vector field
\begin{equation}\label{cj232sdser}
\widehat Z^+(x,y;\alpha)=(\hat f^+(x,y;\alpha),\hat g^+(x,y;\alpha)):=(f^+(x+\varphi_1(\alpha),y;\alpha),g^+(x+\varphi_1(\alpha),y;\alpha))
\end{equation}
for $\alpha\in U$. In particular, $\widehat Z^+(x,y;0)=Z^+(x,y;0)=Z_0^+$. Then we can define a Poincar\'e map $P(y;\alpha)$ having the $y$-axis near $O$ as the Poincar\'e section by the forward orbits of $\widehat Z^+(x,y;\alpha)$. From Lemma~\ref{csin34f} and its proof, $\varphi_2(\alpha)$ satisfies $\varphi_2(0)=0$ and $P(\varphi_2(\alpha);\alpha)=\varphi_2(\alpha)$. Thus, applying the Implicit Function Theorem to the equation $P(y;\alpha)=y$ and using the result of \cite[p.384]{AAA}, we can write $\varphi_2(\alpha)$ around $\alpha=0$ as
\begin{equation}\label{cerid0f}
\begin{aligned}
\varphi_2(\alpha)=\frac{\hat\kappa_1}{\hat f^+(0,0;0)(1-\hat\lambda(0))}\alpha_1+\frac{\hat\kappa_2}{\hat f^+(0,0;0)(1-\hat\lambda(0))}\alpha_2+\mathcal{O}(\|\alpha\|^2),
\end{aligned}
\end{equation}
where
\begin{equation}\label{485ujvsd}
\begin{aligned}
\hat\kappa_i&:=\int^{T_0}_0\hat\lambda(t)(\hat f^+\hat g^+_{\alpha_i}-\hat g^+\hat f^+_{\alpha_i})(\gamma_0(t);0)dt,\quad i=1,2,\\
\hat\lambda(t)&:=\exp\left(\int^{T_0}_t(\hat f^+_x+\hat g^+_y)(\gamma_0(s);0)ds\right),\qquad t\in[0,T_0].
\end{aligned}
\end{equation}
It follows from $\varphi_1(0)=0$ and (\ref{cj232sdser}) that
\begin{equation}\label{cjwn434343}
\hat f^+(x,y;0)=f^+(x,y;0),\qquad \hat g^+(x,y;0)=g^+(x,y;0), \qquad\hat\lambda(t)=\lambda(t)
\end{equation}
and
\begin{equation}\label{4839cfwe}
\begin{aligned}
\hat f^+_{\alpha_i}(x,y;0)&=f^+_x(x,y;0)\frac{\partial\varphi_1(0)}{\partial\alpha_i}+f^+_{\alpha_i}(x,y;0)\\
&=-\frac{g^+_{\alpha_i}(0,0;0)}{g^+_x(0,0;0)}f^+_x(x,y;0)+f^+_{\alpha_i}(x,y;0),\qquad i=1,2,\\
\hat g^+_{\alpha_i}(x,y;0)&=g^+_x(x,y;0)\frac{\partial\varphi_1(0)}{\partial\alpha_i}+g^+_{\alpha_i}(x,y;0)\\
&=-\frac{g^+_{\alpha_i}(0,0;0)}{g^+_x(0,0;0)}g^+_x(x,y;0)+g^+_{\alpha_i}(x,y;0),\qquad i=1,2.
\end{aligned}
\end{equation}
Thus by (\ref{cerid0f}), (\ref{485ujvsd}), (\ref{cjwn434343}) and (\ref{4839cfwe}) we get
\begin{equation}\label{cjsdser222}
\begin{aligned}
\varphi_2(\alpha)=\frac{\nu g^+_{\alpha_1}(0,0;0)/g^+_x(0,0;0)+\kappa_1}{f^+(0,0;0)(1-\lambda(0))}\alpha_1+\frac{\nu g^+_{\alpha_2}(0,0;0)/g^+_x(0,0;0)+\kappa_2}{f^+(0,0;0)(1-\lambda(0))}\alpha_2+\mathcal{O}(\|\alpha\|^2),
\end{aligned}
\end{equation}
where $\lambda(t)$, $\kappa_1$ and $\kappa_2$ are defined in $(\ref{ceirucew434})$,
\begin{equation}\label{4ivag}
\nu:=\int^{T_0}_0\lambda(t)(g^+ f^+_x-f^+g^+_x)(\gamma_0(t);0)dt.
\end{equation}

Finally, under the assumption of $\kappa_1g^+_{\alpha_2}(0,0;0)-\kappa_2g^+_{\alpha_1}(0,0;0)\ne0$,
$$\left.\det \frac{\partial (\varphi_1(\alpha),\varphi_2(\alpha))}{\partial(\alpha_1,\alpha_2)}\right|_{(\alpha_1,\alpha_2)=(0,0)}
=\frac{\kappa_1g^+_{\alpha_2}(0,0;0)-\kappa_2g^+_{\alpha_1}(0,0;0)}{f^+(0,0;0)g^+_x(0,0;0)(1-\lambda(0))}\ne0.$$
which concludes this lemma by the Implicit Function Theorem.
\end{proof}

According to Lemma~\ref{cewr343787}, we now consider the reparameterize vector field
$$
\widetilde Z(x, y;\beta)=\left\{
\begin{aligned}
  &\widetilde Z^+(x, y;\beta)=(p^+(x,y;\beta),q^+(x,y;\beta))
~~~~&& {\rm if}~(x, y)\in\Sigma^+,\\
  &\widetilde Z^-(x, y;\beta)=(p^-(x,y;\beta),q^-(x,y;\beta))
~~~~&& {\rm if}~(x, y)\in\Sigma^-\\
\end{aligned}
\right.
$$
for $\beta\in V_1$, where
\begin{equation}\label{ckw345g}
\begin{aligned}
(p^+(x,y;\beta),q^+(x,y;\beta))&:=(f^+(x+\beta_1,y;\varphi^{-1}(\beta)),g^+(x+\beta_1,y;\varphi^{-1}(\beta))),\\
(p^-(x,y;\beta),q^-(x,y;\beta))&:=(-f^+(-x-\beta_1,-y;\varphi^{-1}(\beta)),-g^+(-x-\beta_1,-y;\varphi^{-1}(\beta))),
\end{aligned}
\end{equation}
and $\varphi^{-1}(\beta)$ denotes the inverse of (\ref{csk3434}). Note that $\widetilde Z(x, y;\beta)$ is $\mathbb{Z}_2$-symmetric vector field with respect to $(-\beta_1,0)$ and
$\widetilde Z(x, y;0)=Z(x, y;0)=Z_0(x,y)$. Therefore, in order to prove Theorem~\ref{mainthm}, we can equivalently study the bifurcation of $\widetilde Z(x, y;\beta)$ for $\beta\in V_1$, provided that $\widetilde Z(x, y;0)$ satisfies the assumptions of Theorem~\ref{mainthm}. Actually, once the bifurcation diagram of $\widetilde Z(x, y;\beta)$ is established, we only need to make a horizontal translation for all the phase portraits to obtain the bifurcation diagram of $Z(x, y;\varphi^{-1}(\beta))$ in the $\beta$-plane.

From Lemma~\ref{slidynamics} and Lemma~\ref{csin34f}, we can directly obtain the following two lemmas on sliding dynamics, standard cycles and grazing cycles of $\widetilde Z(x, y;\beta)$ for $\alpha\in V_1$.

\begin{lm}\label{csdnewr}
Let $\widetilde Z(x,y;0)$ satisfy the assumptions of Theorem~\ref{mainthm}. For a sufficiently small figure eight annulus neighborhood $\mathcal{A}$ of $\Upsilon_0$ there is a neighborhood $V_2\subset V_1$ of $\beta=0$ such that $\widetilde Z(x,y;\beta)$ satisfies the following properties for $\beta\in V_2$.
\begin{itemize}
\item[{\rm(1)}] If $\beta_1=0$, $\mathcal{A}\cap\Sigma$ is split into two crossing segments by a fold-fold at $O$ satisfying
    $$
    \begin{aligned}
    p^-(0,0;\beta)&=-p^+(0,0;\beta)<0,\\
    q^-(0,0;\beta)&=-q^+(0,0;\beta)=0,\\
    q^-_x(0,0;\beta)&=-q^+_x(0,0;\beta)<0.
    \end{aligned}
    $$
\item[{\rm(2)}] If $\beta_1\ne0$, $\mathcal{A}\cap\Sigma$ is split into two crossing segments and a sliding segment $\{(x,0):\min\{-2\beta_1,0\}<x<\max\{-2\beta_1,0\}\}$ by two regular-folds, $O$ and $(-2\beta_1,0)$, which respectively satisfy
$$p^+(0,0;\beta)>0,\qquad q^+(0,0;\beta)=0,\qquad q^+_x(0,0;\beta)>0,\qquad \beta_1q^-(0,0;\beta)>0$$
and
$$p^-(-2\beta_1,0;\beta)<0,\qquad q^-(-2\beta_1,0;\beta)=0,\qquad q^-_x(-2\beta_1,0)<0,\qquad \beta_1q^+(-2\beta_1,0;\beta)<0.$$
    In addition, if $\beta_1>0$ $($resp. $<0$$)$, the sliding segment is stable $($resp. unstable$)$ and there is a unique pseudo-equilibrium, which lies at $(-\beta_1,0)$ and is a pseudo-saddle.
\end{itemize}
\end{lm}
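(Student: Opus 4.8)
The plan is to derive everything directly, as the lemma indicates, from Lemma~\ref{slidynamics} and the $\mathbb{Z}_2$-symmetry of $\widetilde Z$, and then to classify $\mathcal A\cap\Sigma$ by a sign analysis of $q^\pm$ on $\Sigma$. First I would record the fold at $O$ for $\widetilde Z^+$. After the reparameterization, $\beta_1=\varphi_1(\alpha)$ is precisely the abscissa of the unique fold of $Z^+(\cdot;\varphi^{-1}(\beta))$ in $\mathcal A^+$ furnished by Lemma~\ref{slidynamics}, so the translation $(x,y)\mapsto(x+\beta_1,y)$ of (\ref{98ncsjkd}) carries that fold to $O$. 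Hence (\ref{ckw345g}) and (\ref{sdkg}) give
\begin{equation*}
q^+(0,0;\beta)=g^+(\beta_1,0;\varphi^{-1}(\beta))=0,\qquad p^+(0,0;\beta)>0,\qquad q^+_x(0,0;\beta)>0,
\end{equation*}
so that $O$ is a visible fold of $\widetilde Z^+$ for every $\beta\in V_2$, once $V_2\subset V_1$ is chosen small.

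Second, I would transport this to $\widetilde Z^-$ by symmetry. Reading (\ref{ckw345g}) off directly yields
\begin{equation*}
\widetilde Z^-(x,y;\beta)=-\widetilde Z^+(-x-2\beta_1,-y;\beta),
\end{equation*}
that is, $\widetilde Z$ is $\mathbb{Z}_2$-symmetric about the midpoint $(-\beta_1,0)$. Evaluating this identity and its $x$-derivative at the reflected point $(-2\beta_1,0)$ turns the three relations above into $q^-(-2\beta_1,0;\beta)=0$, $p^-(-2\beta_1,0;\beta)=-p^+(0,0;\beta)<0$, and the value of $q^-_x(-2\beta_1,0;\beta)$ obtained by the chain rule, making $(-2\beta_1,0)$ the symmetric visible fold of $\widetilde Z^-$. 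The regular-point inequalities $\beta_1q^-(0,0;\beta)>0$ and $\beta_1q^+(-2\beta_1,0;\beta)<0$ then follow from $q^+(-2\beta_1,0;\beta)=g^+(-\beta_1,0;\varphi^{-1}(\beta))$ together with the fact that, near the unique fold at $x=\beta_1$, the map $x\mapsto g^+(x,0;\cdot)$ passes from negative to positive values (since $g^+_x>0$ there), while $-\beta_1$ lies to one side of $\beta_1$ according to $\mathrm{sgn}\,\beta_1$.

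Third, I would classify $\mathcal A\cap\Sigma$. For $\beta_1=0$ the two folds coincide at $O$, which is then a fold-fold; a short expansion of $q^\pm$ near $O$ gives $q^+(x,0)q^-(x,0)>0$ for small $x\ne0$, so $O$ splits $\mathcal A\cap\Sigma$ into two crossing segments, which is statement~(1). For $\beta_1\ne0$ the folds are distinct, $q^+$ changes sign at $O$ and $q^-$ at $(-2\beta_1,0)$; hence $q^+q^-<0$ exactly on the open interval between them and $q^+q^->0$ outside, identifying the sliding segment $\{(x,0):\min\{-2\beta_1,0\}<x<\max\{-2\beta_1,0\}\}$ and the two flanking crossing segments. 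Reading the sign of $q^+$ on this interval gives $q^+<0<q^-$ (stable) for $\beta_1>0$ and $q^+>0>q^-$ (unstable) for $\beta_1<0$.

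Finally, for the pseudo-equilibrium I would use the Filippov sliding field $Z^s$ of Section~2. Symmetry about $(-\beta_1,0)$ forces $p^++p^-=q^++q^-=0$ there, whence $\mu=\tfrac12$ and $p^s=\tfrac12(p^++p^-)=0$, so $(-\beta_1,0)$ is a pseudo-equilibrium; uniqueness follows since $p^s$ is monotone along the short segment for $\beta$ small. Evaluating $p^s$ at the endpoints, namely $p^s=p^+>0$ at $O$ (where $\mu=0$) and $p^s=p^-<0$ at $(-2\beta_1,0)$ (where $\mu=1$), shows that for $\beta_1>0$ the sliding flow points outward at both ends of a stable segment, so the equilibrium repels along $\Sigma$ yet attracts transversally, while for $\beta_1<0$ it points inward on an unstable segment, attracting along $\Sigma$ yet repelling transversally; in either case it is a pseudo-saddle. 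I expect the main obstacle to be the careful sign bookkeeping through the symmetry and chain rule---especially pinning down $q^-_x$ and hence the visibility of the $\widetilde Z^-$-fold---and the pseudo-saddle classification, where one must combine the transversal (in)stability of the sliding segment with the direction of the one-dimensional sliding flow rather than a naive node/saddle dichotomy.
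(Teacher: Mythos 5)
Your proposal is correct and follows essentially the same route as the paper, which in fact offers no written proof of Lemma~\ref{csdnewr}: the paper merely asserts that the lemma is obtained ``directly'' from (\ref{csk3434}), (\ref{98ncsjkd}), the $\mathbb{Z}_2$-symmetry and Lemma~\ref{slidynamics}, and your four steps (the fold of $\widetilde Z^+$ at $O$, the reflection identity $\widetilde Z^-(x,y;\beta)=-\widetilde Z^+(-x-2\beta_1,-y;\beta)$, the sign analysis of $q^\pm$ along $\Sigma$, and the midpoint pseudo-equilibrium with $\mu=\tfrac12$) are precisely that direct verification.

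One remark is worth making explicit, since you flagged it yourself as the delicate point. The chain rule through the point reflection does \emph{not} flip the sign of $q^-_x$: from $q^-(x,y;\beta)=-q^+(-x-2\beta_1,-y;\beta)$ one gets $q^-_x(x,y;\beta)=+q^+_x(-x-2\beta_1,-y;\beta)$, hence $q^-_x(0,0;\beta)=q^+_x(0,0;\beta)>0$ when $\beta_1=0$ and $q^-_x(-2\beta_1,0;\beta)=q^+_x(0,0;\beta)>0$ when $\beta_1\ne0$. The printed statement of the lemma asserts instead $q^-_x(0,0;\beta)=-q^+_x(0,0;\beta)<0$ and $q^-_x(-2\beta_1,0)<0$; those inequalities are sign typos, and your computation gives the correct values. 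This is confirmed by the lemma's own remaining claims: visibility of the fold of $\widetilde Z^-$ means $(\widetilde Z^-)^2h=p^-q^-_x<0$, which with $p^-<0$ forces $q^-_x>0$; two crossing segments at the fold-fold in statement (1) need $q^+(x,0)\,q^-(x,0)\approx q^+_xq^-_x\,x^2>0$ for small $x\ne0$; and stability of the sliding segment for $\beta_1>0$ needs $q^->0$ just to the right of $x=-2\beta_1$. So do not weaken your argument to match the printed signs. The only other spot needing slightly more care is the uniqueness of the pseudo-equilibrium: rather than monotonicity of $p^s$ itself (whose denominator varies), argue with the numerator $W:=q^+p^--q^-p^+$, whose zeros on the open sliding segment coincide with those of $p^s$ and which satisfies $W_x\to-2f^+(0,0;0)\,g^+_x(0,0;0)<0$ as $\beta\to0$, giving a unique zero at $(-\beta_1,0)$; your pseudo-saddle classification from the outward/inward endpoint directions of the sliding flow combined with the transversal (in)stability of the segment is then exactly right.
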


\begin{lm}\label{standardcy}
Let $\widetilde Z(x,y;0)$ satisfy the assumptions of Theorem~\ref{mainthm}. For a sufficiently small figure eight annulus neighborhood $\mathcal{A}$ of $\Upsilon_0$ there exists a neighborhood $V_3\subset V_2$ of $\beta=0$ such that
for $\beta\in V_3$, $\widetilde Z(x,y;\beta)$ has exactly two grazing cycles in $\mathcal{A}$, which are internally stable and $\mathbb{Z}_2$-symmetric with respect to $(-\beta_1,0)$, if $\beta_2=0$. The grazing cycle in $\Sigma^+$ $($resp. $\Sigma^-)$ grazes at $O$ $($resp. $(-2\beta_1,0))$. Moreover, the two grazing cycles become hyperbolically stable standard cycles in $\mathcal{A}$ if $\beta_2>0$, while if $\beta_2<0$, they disappear and there exist no standard cycles and grazing cycles in $\mathcal{A}$.
\end{lm}

To identify the sliding cycles, crossing cycles and sliding homoclinic orbits of $\widetilde Z(x,y;\beta)$,
we introduce two transition maps and a displacement map as follows.
Let $\Pi:=\{(x, b): |x-a|<\delta\}$, where $(a, b)\in\Gamma_0$ satisfies
$$
q^+(a,b;0)=g^+(a, b;0)>0
$$
and $\delta>0$ is a constant such that $(a, b)$ is the unique intersection between $\Pi$ and $\Gamma_0$. Then, according to $({\bf H1})$ and Lemma~\ref{csdnewr}, for a sufficiently small constant $\epsilon_0>0$ there is a neighborhood $V_4\subset V_3$ of $\beta=0$ such that the forward (resp. backward) orbit of $\widetilde Z^+(x,y;\beta)$ for $\beta\in V_4$ with the initial value $(x, 0)$ satisfying $x\in(-\epsilon_0,\epsilon_0)$ can reach $\Pi$ at a point $(x^+, b)$ (resp. $(x^-, b)$) after a finite time $t=\tau^+(x;\beta)>0$ (resp. $t=\tau^-(x;\beta)<0$). Therefore, we can define transition maps
\begin{equation}\label{37ndjks2}
\begin{aligned}
\mathcal{D}^+(x;\beta):=x^+,\qquad \mathcal{D}^-(x;\beta):=x^-
\end{aligned}
\end{equation}
for $x\in(-\epsilon_0,\epsilon_0)$ and $\beta\in V_4$.
Besides, we can select $\epsilon_1\in(0,\epsilon_0)$ and a suitable $V_4$ such that the displacement map
\begin{equation}\label{ckjrie562}
\mathcal{D}(x;\beta):=\mathcal{D}^+(x;\beta)-\mathcal{D}^-(-2\beta_1-x;\beta)
\end{equation}
is defined well for $x\in(-\epsilon_1,\epsilon_1)$ and $\beta\in V_4$.

Based on the definitions of these maps and the sliding dynamics given in Lemma~\ref{csdnewr}, we easily obtain
\begin{lm}\label{hdcshu}
Considering the maps $\mathcal{D}^\pm(x;\beta):(-\epsilon_0,\epsilon_0)\times V_4\rightarrow\mathbb{R}$ and $\mathcal{D}(x;\beta): (-\epsilon_1,\epsilon_1)\times V_4\rightarrow\mathbb{R}$ constructed in $(\ref{37ndjks2})$ and $(\ref{ckjrie562})$ respectively, we have the following statements for $\beta\in V_4$.
\begin{itemize}
\item[{\rm(1)}] The crossing cycles of $\widetilde Z(x,y;\beta)$ bifurcating from $\Upsilon_0$ are in one-to-one correspondence with the zeros of $\mathcal{D}(x;\beta)$ in $\mathcal{I}^o:=(\max\{-2\beta_1,0\},\epsilon_1)$. Moreover, the multiplicity and stability of a crossing cycle are the same as the multiplicity and stability of the corresponding zero of $\mathcal{D}(x;\beta)$.
\item[{\rm(2)}] $\widetilde Z(x,y;\beta)$ has a critical crossing cycle bifurcating from $\Upsilon_0$ if and only if either $\beta_1>0$ and $\mathcal{D}(0;\beta)=0$ or $\beta_1<0$ and $\mathcal{D}(-2\beta_1;\beta)=0$. Moreover, if there is a critical crossing cycle, then it is unique and its external stability is the same as the the right-side stability of the zero $x=\max\{-2\beta_1,0\}$ of $\mathcal{D}(x;\beta)$.
\item[{\rm(3)}] $\widetilde Z(x,y;\beta)$ has a one-zonal sliding cycle bifurcating from $\Upsilon_0$ if and only if either $\beta_1>0, \beta_2<0$ and $\mathcal{D}^+(0;\beta)-\mathcal{D}^-(-\beta_1;\beta)<0$ or $\beta_1<0, \beta_2>0$ and $\mathcal{D}^+(-\beta_1;\beta)-\mathcal{D}^-(0;\beta)>0$. Moreover, if there is a one-zonal sliding cycle, then $\widetilde Z(x,y;\beta)$ has exactly two one-zonal sliding cycles, which are $\mathbb{Z}_2$-symmetric with respect to $(-\beta_1,0)$ and stable $($resp. unstable$)$ for $\beta_1>0$ $($resp. $<0$$)$.
\item[{\rm(4)}] $\widetilde Z(x,y;\beta)$ has a sliding homoclinic orbit bifurcating from $\Upsilon_0$ if and only if either $\beta_1>0, \beta_2<0$ and $\mathcal{D}^+(0;\beta)-\mathcal{D}^-(-\beta_1;\beta)=0$ or $\beta_1<0, \beta_2>0$ and $\mathcal{D}^+(-\beta_1;\beta)-\mathcal{D}^-(0;\beta)=0$. Moreover, if there is a sliding homoclinic orbits, then $\widetilde Z(x,y;\beta)$ has exactly two sliding homoclinic orbits, which are to the pseudo-saddle $(-\beta_1,0)$, one-zonal and $\mathbb{Z}_2$-symmetric with respect to $(-\beta_1,0)$.
\item[{\rm(5)}] $\widetilde Z(x,y;\beta)$ has a two-zonal sliding cycle bifurcating from $\Upsilon_0$ if and only if either $\beta_1>0, \beta_2<0, \mathcal{D}(0;\beta)<0, \mathcal{D}^+(0;\beta)-\mathcal{D}^-(-\beta_1;\beta)>0$ or $\beta_1<0, \beta_2>0, \mathcal{D}(-2\beta_1;\beta)>0, \mathcal{D}^+(-\beta_1;\beta)-\mathcal{D}^-(0;\beta)<0$. Moreover, if there is a two-zonal sliding cycle, then it is unique and stable $($resp. unstable$)$ for $\beta_1>0$ $($resp. $<0$$)$.
\end{itemize}
\end{lm}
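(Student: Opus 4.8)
The plan is to establish all five statements by one mechanism: convert the geometric definition of each bifurcating object into an equality or inequality between the transition maps $\mathcal{D}^+$, $\mathcal{D}^-$ and the displacement map $\mathcal{D}$, and use the $\mathbb{Z}_2$-symmetry to rewrite every arc in $\Sigma^-$ as an arc of $\widetilde Z^+$. Write $S(x,y)=(-2\beta_1-x,-y)$ for the rotation by $\pi$ about the pseudo-saddle $(-\beta_1,0)$; a direct computation from $(\ref{ckw345g})$ shows that $t\mapsto S\gamma(t)$ is an orbit of $\widetilde Z^-$ with the same time orientation whenever $\gamma$ is an orbit of $\widetilde Z^+$, that $S$ fixes $(-\beta_1,0)$, and that $S$ interchanges the two folds of Lemma~\ref{csdnewr}. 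The second recurring device is the non-crossing of orbits: if the forward orbit of $\widetilde Z^+$ from one point of $\Sigma$ and the backward orbit of $\widetilde Z^+$ from another meet $\Pi$ at the same abscissa, then by uniqueness they lie on a single orbit of $\widetilde Z^+$, so the first point flows to the second.

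For statement (1) I would first show that every crossing cycle bifurcating from $\Upsilon_0$ is $S$-symmetric and meets $\Sigma$ at an $S$-pair $(x,0)$, $(-2\beta_1-x,0)$ with $x\in\mathcal{I}^o$. The upper arc of such a cycle enters at the right crossing point, where $q^+>0$, and leaves at the left one, where $q^+<0$; by the symmetry its $S$-image is exactly a $\widetilde Z^-$-arc joining the same two points in $\Sigma^-$, so it supplies the required lower arc, and closure is equivalent to the forward orbit of $\widetilde Z^+$ from $(x,0)$ reaching $(-2\beta_1-x,0)$. By the non-crossing device this is equivalent to $\mathcal{D}^+(x;\beta)=\mathcal{D}^-(-2\beta_1-x;\beta)$, i.e. $\mathcal{D}(x;\beta)=0$. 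To exclude asymmetric crossing cycles I would use that the full upper transition $\mathcal{T}^+$ (one loop close to $\Gamma_0$, from the right branch of $\Sigma$ to the left one) is a flow-induced local diffeomorphism: an asymmetric cycle and its $S$-image would force a chord of $\mathcal{T}^+$ to have slope $-1$, which is incompatible with the orientation of $\mathcal{T}^+$ near the grazing cycle. Finally, realizing the return on $\Pi$ as $\mathcal{D}^+$ composed with the $S$-image of $\mathcal{D}^+$ identifies the Poincar\'e multiplier of the cycle with a quantity controlled by $\partial_x\mathcal{D}$, so the multiplicity and the stability of the cycle match those of the corresponding zero. Statement (2) is the endpoint version: a critical crossing cycle is a crossing cycle whose crossing point has reached a fold, i.e. the left end of $\mathcal{I}$, which is $x=0$ for $\beta_1>0$ and $x=-2\beta_1$ for $\beta_1<0$; this yields the two stated conditions, and the external stability is read from the one-sided sign of $\mathcal{D}$ just to the right of that endpoint.

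For statements (3)--(5) I would read off the sliding portrait from Lemma~\ref{csdnewr}. Take $\beta_1>0$ and $\beta_2<0$, so that the sliding segment is stable, the pseudo-saddle $(-\beta_1,0)$ repels along $\Sigma$, and the escaping orbit does return to $\Sigma$. The orbit of $\widetilde Z^+$ escaping the visible fold $O$ returns to $\Sigma$ at some point $\xi$, and its position relative to $(-\beta_1,0)$ is detected by comparing this orbit with the orbit arriving at the pseudo-saddle from above, that is by the sign of $\mathcal{D}^+(0;\beta)-\mathcal{D}^-(-\beta_1;\beta)$. If this is negative the return satisfies $-\beta_1<\xi<0$, the orbit slides back to $O$, and a one-zonal sliding cycle is produced (statement (3)); together with its $S$-image this gives exactly two, symmetric about $(-\beta_1,0)$ and stable because the sliding segment attracts. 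If it vanishes the escaping orbit lands precisely at the pseudo-saddle, giving a one-zonal sliding homoclinic orbit to $(-\beta_1,0)$ (statement (4)). If it is positive the orbit lands left of the pseudo-saddle, slides to the opposite fold $(-2\beta_1,0)$ and crosses into $\Sigma^-$; the supplementary hypothesis $\mathcal{D}(0;\beta)<0$ prevents immediate closure into a crossing cycle, and the two half-orbits then concatenate through both zones, by the symmetry, into a unique two-zonal sliding cycle (statement (5)). The cases $\beta_1<0$ are handled verbatim with the roles of forward and backward orbits, of $O$ and $(-2\beta_1,0)$, and of stability interchanged, which is the source of the mirrored conditions in the statements.

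I expect the main obstacle to be the sign and orientation bookkeeping that pins down the exact inequalities: deciding which side of the pseudo-saddle's incoming orbit corresponds to the sign of $\mathcal{D}^+(0;\beta)-\mathcal{D}^-(-\beta_1;\beta)$, and, for statement (5), showing that the two inequalities $\mathcal{D}(0;\beta)<0$ and $\mathcal{D}^+(0;\beta)-\mathcal{D}^-(-\beta_1;\beta)>0$ together guarantee that the concatenated orbit actually returns through both zones and does so as a single, unique cycle rather than escaping the annulus $\mathcal{A}$. A closely related point needing care is the completeness of statement (1): verifying that the orientation of $\mathcal{T}^+$ near the grazing cycle $\Gamma_0$ really forbids the slope $-1$ chords, so that the zeros of $\mathcal{D}$ in $\mathcal{I}^o$ account for all crossing cycles and not merely the symmetric ones.
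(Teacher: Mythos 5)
The paper never actually proves this lemma: it is stated after the single remark that it follows ``easily'' from the constructions (\ref{37ndjks2})--(\ref{ckjrie562}) and the sliding dynamics of Lemma~\ref{csdnewr}, so your proposal is supplying exactly the argument the paper leaves implicit, and it is correct: rewriting every lower arc as the $S$-image of an upper arc, using uniqueness of solutions of $\widetilde Z^+$ to turn closure conditions into equalities of $\mathcal{D}^\pm$ on $\Pi$, and reading statements (3)--(5) off the phase portrait of the escaping/arriving orbits at the two visible folds and the pseudo-saddle is precisely how the lemma is meant to be verified, and your case analysis reproduces the stated inequalities with the right signs.

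Two small points deserve correction, neither of which derails the approach. First, in your exclusion of asymmetric crossing cycles (the one substantive claim hidden behind the paper's ``easily''): the transition map $\mathcal{T}^+$ from the right crossing branch to the left one is strictly \emph{decreasing} (orbits starting further from the grazing loop return further to the left), so an asymmetric cycle together with its $S$-image would force a chord of $\mathcal{T}^+$ of slope $+1$, not $-1$; equivalently, the reflected map $G(x)=-2\beta_1-\mathcal{T}^+(x)$ is increasing, and an increasing one-dimensional map has no genuine period-two orbits, which is the cleanest way to phrase the exclusion. Second, in statement (5) the hypothesis $\mathcal{D}(0;\beta)<0$ does more than ``prevent immediate closure into a crossing cycle'': since $\mathcal{D}(0;\beta)<0$ is equivalent to the returning orbit from the fold $O$ landing strictly to the right of $(-2\beta_1,0)$, it is what guarantees the landing point lies \emph{on the sliding segment} rather than in the left crossing region (where the orbit would simply cross into $\Sigma^-$ and no sliding cycle through the folds could form); with that reading, the two inequalities in (5) exactly pin the landing point between $(-2\beta_1,0)$ and the pseudo-saddle, and the concatenation into a unique $S$-symmetric two-zonal sliding cycle follows as you describe.
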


By Lemma~\ref{hdcshu}, we only need to study the maps $\mathcal{D}^\pm$ and $\mathcal{D}$ to obtain the information on sliding cycles, crossing cycles and sliding homoclinic orbits bifurcating from $\Upsilon_0$.

\begin{lm}
The maps $\mathcal{D}^+(x;\beta)$ and $\mathcal{D}^-(x;\beta): (-\epsilon_0,\epsilon_0)\times V_4\rightarrow\mathbb{R}$ constructed in $(\ref{37ndjks2})$ can be expanded as
\begin{equation}\label{clksmf23}
\begin{aligned}
\mathcal{D}^+(x;\beta)=&~a+A_1^+\beta_1+A_2^+\beta_2+\mathcal{O}(\|\beta\|^2)+\mathcal{O}(\|\beta\|^2)x
+(B^++\mathcal{O}(\|\beta\|))x^2+\mathcal{O}(x^3),\\
\mathcal{D}^-(x;\beta)=&~a+A_1^-\beta_1+A_2^-\beta_2+\mathcal{O}(\|\beta\|^2)+\mathcal{O}(\|\beta\|^2)x
+(B^-+\mathcal{O}(\|\beta\|))x^2+\mathcal{O}(x^3),
\end{aligned}
\end{equation}
where
\begin{equation}\label{ceiouncs34}
\begin{aligned}
A^\pm_1&:=\frac{\nu^\pm}{g^+(a,b;0)}-\frac{\kappa^\pm_1\left(\nu g^+_{\alpha_2}(0,0;0)+\kappa_2g^+_x(0,0;0)\right)-\kappa^\pm_2\left(\nu g^+_{\alpha_1}(0,0;0)+\kappa_1g^+_x(0,0;0)\right)}{g^+(a,b;0)\left(\kappa_1g^+_{\alpha_2}(0,0;0)-\kappa_2g^+_{\alpha_1}(0,0;0)\right)},\\
A^\pm_2&:=\frac{\left(\kappa^\pm_2g^+_{\alpha_1}(0,0;0)-\kappa^\pm_1g^+_{\alpha_2}(0,0;0)\right)f^+(0,0;0)(1-\lambda(0))}
{g^+(a,b;0)\left(\kappa_1g^+_{\alpha_2}(0,0;0)-\kappa_2g^+_{\alpha_1}(0,0;0)\right)},\\
B^\pm&:=\frac{g^+_x(0,0;0)\lambda^\pm(0)}{2g^+(a, b;0)},\\
\kappa^\pm_i&:=\int^{\tau^\pm_0}_0\lambda^\pm(t)\left(f^+g^+_{\alpha_i}-g^+f^+_{\alpha_i}\right)(\gamma_0(t);0)dt, \qquad i=1, 2,\\
\nu^\pm&:=\int^{\tau^\pm_0}_0\lambda^\pm(t)(g^+f^+_x-f^+g^+_x)(\gamma_0(t),0)dt,\\
\lambda^\pm(t)&:=\exp\left(\int^{\tau^\pm_0}_t\left(f^+_x+g^+_y\right)(\gamma_0(s);0)ds\right),\qquad t\in[0, \tau^\pm_0],
\end{aligned}
\end{equation}
$\lambda(t)$, $\kappa_1$ and $\kappa_2$ are defined in $(\ref{ceirucew434})$, $\nu$ is defined in $(\ref{4ivag})$, $\tau^+_0>0$ $($resp. $\tau^-_0<0$$)$ is the travelling time of the forward $($resp. backward$)$ orbit of $\Gamma_0$ from $O$ to $\Pi$.
\end{lm}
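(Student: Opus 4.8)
The plan is to realize both transition maps as the first coordinate of a single smooth flow map to the section $\Pi$, and then to split the required expansion into three independent pieces: the constant term $a$, the linear-in-$\beta$ coefficients $A_i^\pm$, and the quadratic-in-$x$ coefficient $B^\pm$. Concretely, for $\beta\in V_4$ I would let $\Psi^{\pm}(x,y;\beta)$ be the $x$-coordinate of the first intersection with $\Pi$ of the forward (for $+$) or backward (for $-$) orbit of $(x,y)$ under $\widetilde Z^+(\cdot;\beta)$. Since $g^+(a,b;0)>0$, the orbit meets the horizontal line $\{y=b\}$ transversally, so by the Implicit Function Theorem the landing time $\tau^\pm$ and the map $\Psi^\pm$ are jointly $C^2$ in $(x,y,\beta)$ near $(O,0)$; moreover $\mathcal{D}^\pm(x;\beta)=\Psi^\pm(x,0;\beta)$, and $\mathcal{D}^\pm(0;0)=a$ because the orbit through $O$ at $\beta=0$ is precisely the arc of $\Gamma_0$ reaching $(a,b)$. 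It then remains to Taylor expand $x\mapsto\Psi^\pm(x,0;\beta)$ at $x=0$ and the resulting coefficients in $\beta$ at $\beta=0$.

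The fold supplies the structural input that kills the linear term and pins down $B^\pm$. By Lemma~\ref{csdnewr}, $O$ is a fold of $\widetilde Z^+(\cdot;\beta)$ for every $\beta$, i.e. $q^+(0,0;\beta)=0$ while $p^+(0,0;\beta)>0$, so $\widetilde Z^+$ is horizontal at $O$. Since $\Psi^\pm$ is constant along orbits, its spatial gradient satisfies $\nabla\Psi^\pm\cdot\widetilde Z^+\equiv0$; evaluating at $O$ gives $\partial_x\Psi^\pm(O;\beta)=0$, which annihilates the linear-in-$x$ term identically in $\beta$ (hence a fortiori makes it $\mathcal{O}(\|\beta\|^2)$). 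Differentiating the same identity once more in $x$ and evaluating at $O$, where $q^+=0$ and $\partial_x\Psi^\pm=0$, leaves $p^+\,\partial_x^2\Psi^\pm+q^+_x\,\partial_y\Psi^\pm=0$, so that $\partial_x^2\Psi^\pm(O;\beta)=-q^+_x(0,0;\beta)\,\partial_y\Psi^\pm(O;\beta)/p^+(0,0;\beta)$; half of this value at $\beta=0$ will be $B^\pm$.

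Both $\partial_y\Psi^\pm(O)$ and the parameter derivatives are then obtained from one Diliberto/Melnikov wedge identity. Writing $w$ for a variation of the orbit solving $\dot w=D\widetilde Z^+w+r$ with inhomogeneity $r$, the scalar $\widetilde Z^+\wedge w:=p^+w_2-q^+w_1$ satisfies $\tfrac{d}{dt}(\widetilde Z^+\wedge w)=(p^+_x+q^+_y)(\widetilde Z^+\wedge w)+\widetilde Z^+\wedge r$, whose integrating factor is exactly $\lambda^\pm$. Integrating over $[0,\tau_0^\pm]$ and using that the displacement of the landing $x$-coordinate equals $-(\widetilde Z^+\wedge w)(\tau_0^\pm)/g^+(a,b;0)$, I would (i) take $r=0$, $w(0)=(0,1)$ (initial point varied in $y$) to get $\partial_y\Psi^\pm(O;0)=-f^+(0,0;0)\lambda^\pm(0)/g^+(a,b;0)$, which combined with the previous paragraph yields $B^\pm=g^+_x(0,0;0)\lambda^\pm(0)/(2g^+(a,b;0))$; and (ii) take $w(0)=0$ with $r=\partial_{\beta_i}\widetilde Z^+$ (initial point fixed at $O$, field varied in $\beta$) to get $\partial_{\beta_i}\mathcal{D}^\pm(0;0)=-\tfrac{1}{g^+(a,b;0)}\int_0^{\tau_0^\pm}\lambda^\pm\bigl(p^+\partial_{\beta_i}q^+-q^+\partial_{\beta_i}p^+\bigr)\,dt$ at $\beta=0$.

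The final and most delicate step is converting (ii) into the stated $A_i^\pm$. By the chain rule on $\widetilde Z^+(x,y;\beta)=(f^+,g^+)(x+\beta_1,y;\varphi^{-1}(\beta))$, the $\beta$-derivatives of $(p^+,q^+)$ at $\beta=0$ split into a translation part, producing $\nu^\pm=\int\lambda^\pm(g^+f^+_x-f^+g^+_x)\,dt$, and a reparametrization part weighted by the columns of $[D\varphi(0)]^{-1}$, producing the $\kappa_j^\pm=\int\lambda^\pm(f^+g^+_{\alpha_j}-g^+f^+_{\alpha_j})\,dt$. Inverting $D\varphi(0)$ from the explicit expansions of $\varphi_1,\varphi_2$ in Lemma~\ref{cewr343787}, whose determinant is $(\kappa_1g^+_{\alpha_2}-\kappa_2g^+_{\alpha_1})/(f^+(0,0;0)g^+_x(0,0;0)(1-\lambda(0)))$, and collecting terms should reproduce $A_1^\pm$ and $A_2^\pm$ verbatim; the $\nu$-contributions of the two columns cancel in the determinant, which is exactly what makes the common denominator $\kappa_1g^+_{\alpha_2}-\kappa_2g^+_{\alpha_1}$ and the combinations $\nu g^+_{\alpha_i}+\kappa_i g^+_x$ line up. I expect this bookkeeping—threading the translation and the inverse reparametrization through the Melnikov integral—to be the main obstacle, whereas the fold-induced vanishing of the linear term and the value of $B^\pm$ are short once the ``constant along orbits'' identity is differentiated. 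Finally, joint $C^2$-dependence lets me absorb the $\beta$-dependence of $\tfrac12\partial_x^2\Psi^\pm(O;\beta)$ into $B^\pm+\mathcal{O}(\|\beta\|)$ and the remaining terms into $\mathcal{O}(x^3)$, yielding $(\ref{clksmf23})$.
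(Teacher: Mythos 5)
Your proposal is correct and takes essentially the same route as the paper: your wedge/variational identity reproduces exactly the derivative formulas the paper imports from \cite[Lemma 5.3]{LTXC} (its (\ref{eiafaowr34})), the fold condition $q^+(0,0;\beta)=0$ from Lemma~\ref{csdnewr} kills the linear and mixed terms in both arguments, and your final chain-rule bookkeeping through $[D\varphi(0)]^{-1}$ is precisely the paper's substitution of (\ref{eiowr34}) and (\ref{aweq3r34}) into those formulas. The only real difference is presentational: you derive from scratch (via orbit-invariance of $\Psi^\pm$ and the Diliberto-type identity) what the paper cites from its earlier work, and your version even gives the slightly stronger fact that the coefficient of $x$ vanishes identically in $\beta$ rather than just to order $\mathcal{O}(\|\beta\|^2)$.
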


\begin{proof}
Tracing the proof of \cite[Lemma 5.3]{LTXC} and using the fact that $q^+(0,0;\beta)=0$ for $\beta\in V_4$ (see Lemma~\ref{csdnewr}), we get
\begin{equation}\label{eiafaowr34}
\begin{aligned}
\frac{\partial \mathcal{D}^\pm(0;0)}{\partial \beta_i}&=\frac{-1}{q^+(a,b;0)}\int^{\tau_0^\pm}_0\exp\int^{\tau^\pm_0}_t(p^+_x+q^+_y)(\gamma_0(s);0)ds
\Big(p^+q^+_{\beta_i}-q^+p^+_{\beta_i}\Big)(\gamma_0(t);0)dt,\quad i=1,2,\\
\frac{\partial \mathcal{D}^\pm(0;0)}{\partial x}&=\frac{q^+(0,0;0)}{q^+(a,b;0)}\exp\int^{\tau^\pm_0}_0(p^+_x+q^+_y)(\gamma_0(t);0)dt=0,\\
\frac{\partial^2 \mathcal{D}^\pm(0;0)}{\partial x\partial\beta_i}&=\frac{q^+_{\beta_i}(0,0;0)}{q^+(a,b;0)}\exp\int^{\tau^\pm_0}_0(p^+_x+q^+_y)(\gamma_0(t);0)dt=0,\quad i=1,2,\\
\frac{\partial^2 \mathcal{D}^\pm(0;0)}{\partial x^2}&=\frac{q^+_x(0,0;0)}{q^+(a,b;0)}\exp\int^{\tau^\pm_0}_0(p^+_x+q^+_y)(\gamma_0(t);0)dt.
\end{aligned}
\end{equation}
Besides, it follows from (\ref{cjsdser}) and (\ref{cjsdser222}) that
\begin{equation}\label{eiowr34}
\begin{aligned}
\frac{\partial\alpha_1(0)}{\partial\beta_1}&=\frac{\nu g^+_{\alpha_2}(0,0;0)+\kappa_2g^+_x(0,0;0)}{\kappa_1g^+_{\alpha_2}(0,0;0)-\kappa_2g^+_{\alpha_1}(0,0;0)},\\
\frac{\partial\alpha_1(0)}{\partial\beta_2}&=\frac{f^+(0,0;0)(1-\lambda(0))g^+_{\alpha_2}(0,0;0)}{\kappa_1g^+_{\alpha_2}(0,0;0)-\kappa_2g^+_{\alpha_1}(0,0;0)},\\
\frac{\partial\alpha_2(0)}{\partial\beta_1}&=-\frac{\nu g^+_{\alpha_1}(0,0;0)+\kappa_1g^+_x(0,0;0)}{\kappa_1g^+_{\alpha_2}(0,0;0)-\kappa_2g^+_{\alpha_1}(0,0;0)},\\
\frac{\partial\alpha_2(0)}{\partial\beta_2}&=-\frac{f^+(0,0;0)(1-\lambda(0))g^+_{\alpha_1}(0,0;0)}{\kappa_1g^+_{\alpha_2}(0,0;0)-\kappa_2g^+_{\alpha_1}(0,0;0)},
\end{aligned}
\end{equation}
and from (\ref{ckw345g}) that
\begin{equation}\label{aweq3r34}
\begin{aligned}
p^+(x,y;0)&=f^+(x,y;0),\qquad \qquad\qquad q^+(x,y;0)=g^+(x,y;0),\\ p^+_{\beta_1}(x,y;0)&=f^+_x(x,y;0)+f^+_{\alpha_1}(x,y;0)\frac{\partial\alpha_1(0)}{\partial\beta_1}+f^+_{\alpha_2}(x,y;0)\frac{\partial\alpha_2(0)}{\partial\beta_1},\\
q^+_{\beta_1}(x,y;0)&=g^+_x(x,y;0)+g^+_{\alpha_1}(x,y;0)\frac{\partial\alpha_1(0)}{\partial\beta_1}+g^+_{\alpha_2}(x,y;0)\frac{\partial\alpha_2(0)}{\partial\beta_1},\\
p^+_{\beta_2}(x,y;0)&=f^+_{\alpha_1}(x,y;0)\frac{\partial\alpha_1(0)}{\partial\beta_2}+f^+_{\alpha_2}(x,y;0)\frac{\partial\alpha_2(0)}{\partial\beta_2},\\
q^+_{\beta_2}(x,y;0)&=g^+_{\alpha_1}(x,y;0)\frac{\partial\alpha_1(0)}{\partial\beta_2}+g^+_{\alpha_2}(x,y;0)\frac{\partial\alpha_2(0)}{\partial\beta_2}.
\end{aligned}
\end{equation}
Hence, substituting (\ref{eiowr34}) and (\ref{aweq3r34}) into (\ref{eiafaowr34}), we get (\ref{clksmf23}).
\end{proof}

\begin{lm}
Let $A^\pm_1, A^\pm_2$ and $B^\pm$ be given in $(\ref{ceiouncs34})$. Then $A^+_1-A^-_1=0$ and
\begin{equation}\label{cewrjcsdk}
A^+_2-A^-_2=\frac{\lambda^-(0)(\lambda(0)-1)f^+(0,0;0)}{g^+(a,b;0)},\qquad B^+-B^-=\frac{\lambda^-(0)(\lambda(0)-1)g^+_x(0,0;0)}{2g^+(a, b;0)}.
\end{equation}
Moreover,
\begin{equation}\label{cksdj4534}
B^+>0,\qquad B^->0,\qquad A^+_2-A^-_2<0,\qquad B^+-B^-<0,
\end{equation}
under the assumptions of Theorem~\ref{mainthm}.
\end{lm}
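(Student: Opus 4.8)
The plan is to reduce both equalities to a single integral identity relating the ``$+$'', ``$-$'' and full-period weighted integrals, and then to substitute into $(\ref{ceiouncs34})$. The geometric starting point is that $\gamma_0$ is $T_0$-periodic with $\gamma_0(0)=O$ and meets $\Pi$ only at $(a,b)=\gamma_0(\tau^+_0)=\gamma_0(\tau^-_0)$; since $(a,b)$ is visited exactly once per period, the two consecutive visits straddling $t=0$ force
\[
\tau^+_0-\tau^-_0=T_0 .
\]
Moreover every integrand occurring in $\kappa^\pm_i,\nu^\pm,\kappa_i,\nu$ and in the exponents of $\lambda^\pm,\lambda$ has the form $\Phi(t)=(\cdots)(\gamma_0(t);0)$ with $\Phi$ being $T_0$-periodic. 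First I would record the relations among the exponential weights: since $\lambda^\pm(t)/\lambda(t)=\exp\bigl(\int_{T_0}^{\tau^\pm_0}(f^+_x+g^+_y)(\gamma_0(s);0)\,ds\bigr)$ is independent of $t$, a direct evaluation using $\tau^+_0-\tau^-_0=T_0$ and the periodicity of $f^+_x+g^+_y$ gives
\[
\lambda^+(t)=\lambda^-(0)\,\lambda(t),\qquad \lambda^-(t)=\frac{\lambda^-(0)}{\lambda(0)}\,\lambda(t),\qquad \lambda(s+T_0)=\frac{\lambda(s)}{\lambda(0)} .
\]

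The key lemma is then: for any $T_0$-periodic $\Phi$,
\[
\int_0^{\tau^+_0}\lambda^+(t)\Phi(t)\,dt-\int_0^{\tau^-_0}\lambda^-(t)\Phi(t)\,dt=\lambda^-(0)\int_0^{T_0}\lambda(t)\Phi(t)\,dt .
\]
To prove it I would substitute the first two displayed relations, factor out $\lambda^-(0)$, and reduce the claim to $-\int_{\tau^+_0}^{T_0}\lambda\Phi=\lambda(0)^{-1}\int_0^{\tau^-_0}\lambda\Phi$; this is closed by the change of variable $t=s+T_0$ on $[\tau^+_0,T_0]$ (which maps onto $[\tau^-_0,0]$ because $\tau^+_0=\tau^-_0+T_0$), using $\Phi(s+T_0)=\Phi(s)$ and $\lambda(s+T_0)=\lambda(s)/\lambda(0)$.

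Applying the lemma with $\Phi=(f^+g^+_{\alpha_i}-g^+f^+_{\alpha_i})(\gamma_0;0)$ and with $\Phi=(g^+f^+_x-f^+g^+_x)(\gamma_0;0)$ yields at once
\[
\kappa^+_i-\kappa^-_i=\lambda^-(0)\,\kappa_i\quad(i=1,2),\qquad \nu^+-\nu^-=\lambda^-(0)\,\nu .
\]
Substituting these into $(\ref{ceiouncs34})$ produces the three differences. For $B^+-B^-$ I use $\lambda^+(0)-\lambda^-(0)=\lambda^-(0)(\lambda(0)-1)$. For $A^+_2-A^-_2$ the numerator factor $(\kappa^+_2-\kappa^-_2)g^+_{\alpha_1}-(\kappa^+_1-\kappa^-_1)g^+_{\alpha_2}=-\lambda^-(0)(\kappa_1g^+_{\alpha_2}-\kappa_2g^+_{\alpha_1})$ cancels the denominator $\kappa_1g^+_{\alpha_2}-\kappa_2g^+_{\alpha_1}$, leaving exactly $\lambda^-(0)(\lambda(0)-1)f^+(0,0;0)/g^+(a,b;0)$. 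For $A^+_1-A^-_1$ the same substitution turns the bracketed numerator of the second fraction into $\lambda^-(0)\,\nu(\kappa_1g^+_{\alpha_2}-\kappa_2g^+_{\alpha_1})$---here the two $g^+_x$-terms vanish because $\kappa_1\kappa_2g^+_x-\kappa_2\kappa_1g^+_x=0$---so after dividing by $\kappa_1g^+_{\alpha_2}-\kappa_2g^+_{\alpha_1}$ that fraction equals $\lambda^-(0)\nu/g^+(a,b;0)$, which exactly cancels the first term $(\nu^+-\nu^-)/g^+(a,b;0)=\lambda^-(0)\nu/g^+(a,b;0)$; hence $A^+_1-A^-_1=0$.

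Finally the inequalities are read off from the signs. Both $\lambda^\pm(0)>0$ as exponentials, $g^+(a,b;0)>0$ by the choice of $\Pi$, and $f^+(0,0;0)>0$, $g^+_x(0,0;0)>0$ by $(\ref{cewr343})$ in {\bf(H1)}, so $B^+>0$ and $B^->0$; and $\lambda(0)<1$ gives $\lambda(0)-1<0$, whence $A^+_2-A^-_2<0$ and $B^+-B^-<0$. I expect the main obstacle to be the bookkeeping in the integral identity: the three weights $\lambda^+,\lambda^-,\lambda$ carry different normalizations (upper limits $\tau^+_0,\tau^-_0,T_0$), and the clean cancellations---especially the vanishing of $A^+_1-A^-_1$---depend on correctly tracking the constants $\lambda^\pm(0)$ together with the period relation $\tau^+_0-\tau^-_0=T_0$ and the periodicity of the integrands.
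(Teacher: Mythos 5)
Your proposal is correct and follows essentially the same route as the paper: both hinge on the relation $\tau^+_0=\tau^-_0+T_0$ together with the time-shift $t\mapsto t+T_0$ and the $T_0$-periodicity of the integrands to obtain $\kappa^+_i-\kappa^-_i=\lambda^-(0)\kappa_i$, $\nu^+-\nu^-=\lambda^-(0)\nu$ and $\lambda^+(0)=\lambda(0)\lambda^-(0)$, after which the differences $A^+_1-A^-_1$, $A^+_2-A^-_2$, $B^+-B^-$ and the sign conditions follow by direct substitution. Your only departure is cosmetic: you package the change-of-variables computation once as a general identity for $T_0$-periodic integrands and apply it to both $\kappa_i$ and $\nu$, whereas the paper carries out the same computation separately for each quantity.
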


\begin{proof}
Due to $\tau^+_0=\tau^-_0+T_0$, by the change $\tilde t=t+T_0$ and $\tilde s=s+T_0$,
\begin{equation}\label{cdnf45t54}
\begin{aligned}
\lambda^-(0)&=\exp\left(\int^{\tau^-_0}_{0}\left(f^+_x+g^+_y\right)(\gamma_0(t);0)dt\right)\\
&=\exp\left(\int^{\tau^+_0}_{T_0}\left(f^+_x+g^+_y\right)(\gamma_0(\tilde t);0)d\tilde t\right)\\
&=\exp\left(\int^{\tau^+_0}_{0}\left(f^+_x+g^+_y\right)(\gamma_0(\tilde t);0)d\tilde t-\int^{T_0}_{0}\left(f^+_x+g^+_y\right)(\gamma_0(\tilde t);0)d\tilde t\right)\\
&=\lambda^+(0)/\lambda(0)
\end{aligned}
\end{equation}
and
$$
\begin{aligned}
\kappa^-_i&=\int^{\tau^-_0}_0\exp\left(\int^{\tau^-_0}_t\left(f^+_x+g^+_y\right)(\gamma_0(s);0)ds\right)
\left(f^+g^+_{\alpha_i}-g^+f^+_{\alpha_i}\right)(\gamma_0(t);0)dt\\
&=\int^{\tau^-_0+T_0}_{T_0}\exp\left(\int^{\tau^-_0+T_0}_{\tilde t}\left(f^+_x+g^+_y\right)(\gamma_0(\tilde s);0)d\tilde s\right)\left(f^+g^+_{\alpha_i}-g^+f^+_{\alpha_i}\right)(\gamma_0(\tilde t);0)d\tilde t\\
&=\int^{\tau^+_0}_{T_0}\exp\left(\int^{\tau^+_0}_{\tilde t}\left(f^+_x+g^+_y\right)(\gamma_0(\tilde s);0)d\tilde s\right)\left(f^+g^+_{\alpha_i}-g^+f^+_{\alpha_i}\right)(\gamma_0(\tilde t);0)d\tilde t\\
&=\kappa^+_i-\int^{T_0}_0\exp\left(\int^{\tau^+_0}_{\tilde t}\left(f^+_x+g^+_y\right)(\gamma_0(\tilde s);0)d\tilde s\right)\left(f^+g^+_{\alpha_i}-g^+f^+_{\alpha_i}\right)(\gamma_0(\tilde t);0)d\tilde t\\
&=\kappa^+_i-\kappa_i\exp\left(\int^{\tau^+_0}_{T_0}\left(f^+_x+g^+_y\right)(\gamma_0(\tilde s);0)d\tilde s\right)\\
&=\kappa^+_i-\kappa_i\exp\left(\int^{\tau^-_0}_{0}\left(f^+_x+g^+_y\right)(\gamma_0(s);0)ds\right)\\
&=\kappa^+_i-\kappa_i\lambda^-(0),\qquad i=1,2,
\end{aligned}
$$
and similarly, $\nu^-=\nu^+-\nu\lambda^-(0)$,
where $\kappa_i$ and $\lambda(t)$ are defined in (\ref{ceirucew434}), $\nu$ is defined in (\ref{4ivag}), $\kappa^\pm_i$ and $\lambda^\pm(t)$ are defined in (\ref{ceiouncs34}).
Hence, a standard computation yields $A^+_1-A^-_1=0$ and (\ref{cewrjcsdk}). Besides, under the assumptions of Theorem~\ref{mainthm}, we have
$$g^+_x(0,0;0)>0,\qquad f^+(0,0;0)>0,\qquad \lambda(0)<1,$$
which together with $\lambda^\pm(0)>0$ and $g^+(a,b;0)>0$ imply (\ref{cksdj4534}).
\end{proof}

\begin{lm}\label{ceriucs}
Let $\mathcal{D}(x;\beta): (-\epsilon_1,\epsilon_1)\times V_4\rightarrow\mathbb{R}$ be the map defined in $(\ref{ckjrie562})$. There exists a neighborhood $V_5\subset V_4$ of $\beta=0$ and a smooth function
\begin{equation}\label{ew84sd}
\psi_1(\beta_1)=\frac{2\lambda(0)g^+_x(0,0;0)}{(\lambda(0)-1)^2f^+(0,0;0)}\beta_1^2+\mathcal{O}(\beta_1^3)
\end{equation}
defined for $(\beta_1,\psi_1(\beta_1))\in V_5$ such that the following statements hold for $\beta\in V_5$.
\vspace{-14pt}
\begin{itemize}
\setlength{\itemsep}{0mm}
\item[{\rm(1)}] If $\beta_2>\psi_1(\beta_1)$, $\mathcal{D}(x;\beta)$ has no zeros in $(-\epsilon_1,\epsilon_1)$.
\item[{\rm(2)}] If $\beta_2=\psi_1(\beta_1)$, $\mathcal{D}(x;\beta)$ has exactly one zero $x_*$ in $(-\epsilon_1,\epsilon_1)$, which is of multiplicity two with $\partial \mathcal{D}(x_*;\beta)/\partial x=0$ and $\partial^2\mathcal{D}(x_*;\beta)/\partial x^2<0$.
\item[{\rm(3)}] If $\beta_2<\psi_1(\beta_1)$, $\mathcal{D}(x;\beta)$ has exactly two zeros $x_+$ and $x_-$ in $(-\epsilon_1,\epsilon_1)$, which are simple with $\partial \mathcal{D}(x_+;\beta)/\partial x<0$ and $\partial \mathcal{D}(x_-;\beta)/\partial x>0$.
\end{itemize}
\end{lm}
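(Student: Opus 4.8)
The plan is to substitute the expansions (\ref{clksmf23}) into the definition (\ref{ckjrie562}) and thereby reduce the zero-counting problem for $\mathcal{D}(x;\beta)$ to the analysis of a single quadratic in $x$. Writing the argument of $\mathcal{D}^-$ as $u=-2\beta_1-x$ and expanding $u^2=x^2+4\beta_1 x+4\beta_1^2$, the Taylor shift transfers a factor $4B^-\beta_1$ from the quadratic term of $\mathcal{D}^-$ into the coefficient of $x$. Collecting terms and using the relation $A_1^+=A_1^-$ (which kills the $\beta_1$-linear part of the constant term) one obtains
\[
\mathcal{D}(x;\beta)=(A_2^+-A_2^-)\beta_2-4B^-\beta_1 x+(B^+-B^-)x^2+\mathcal{O}(\|\beta\|^2)+\mathcal{O}(\|\beta\|^2)x+\mathcal{O}(\|\beta\|)x^2+\mathcal{O}(x^3).
\]
By the sign facts (\ref{cksdj4534}) we have $A_2^+-A_2^-<0$ and $B^+-B^-<0$, so to leading order $\mathcal{D}(\cdot;\beta)$ is a downward-opening parabola whose height is controlled linearly by $\beta_2$.

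Next I would run a direct implicit-function analysis rather than invoking Weierstrass preparation, since only $C^2$ regularity is available. At the origin $\mathcal{D}(0;0)=0$, $\partial_x\mathcal{D}(0;0)=0$ and $\partial_x^2\mathcal{D}(0;0)=2(B^+-B^-)\ne0$; hence $\partial_x\mathcal{D}(x;\beta)=0$ can be solved by the Implicit Function Theorem for a unique smooth critical point $x=X(\beta)$ with $X(0)=0$, which is a strict maximum because $\partial_x^2\mathcal{D}<0$ nearby. Define the vertex value $M(\beta):=\mathcal{D}(X(\beta);\beta)$. By the envelope identity $\partial_{\beta_2}M(\beta)=\partial_{\beta_2}\mathcal{D}(X(\beta);\beta)$, so $\partial_{\beta_2}M(0)=A_2^+-A_2^-\ne0$, and the Implicit Function Theorem again yields a smooth $\beta_2=\psi_1(\beta_1)$ solving $M(\beta)=0$ with $\psi_1(0)=0$. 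The trichotomy is then immediate from the shape of the parabola: since $\partial_{\beta_2}M<0$, we have $M(\beta)<0$ iff $\beta_2>\psi_1(\beta_1)$ (no zeros), $M(\beta)=0$ iff $\beta_2=\psi_1(\beta_1)$ (a unique double zero at $x_*=X(\beta)$ with $\partial_x\mathcal{D}=0$, $\partial_x^2\mathcal{D}<0$), and $M(\beta)>0$ iff $\beta_2<\psi_1(\beta_1)$ (two simple zeros $x_\pm$ straddling $X(\beta)$, with $\partial_x\mathcal{D}(x_+;\beta)<0$ on the falling branch and $\partial_x\mathcal{D}(x_-;\beta)>0$ on the rising branch). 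The quadratic tangency $\psi_1'(0)=0$ follows because the constant term of $\mathcal{D}$ has no $\beta_1$-linear part (again thanks to $A_1^+=A_1^-$), so that $M(\beta_1,0)=\mathcal{O}(\beta_1^2)$.

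To pin down the leading coefficient in (\ref{ew84sd}) I would evaluate $\psi_1(\beta_1)=-M(\beta_1,0)/(A_2^+-A_2^-)+\mathcal{O}(\beta_1^3)$. The vertex value at $\beta_2=0$ receives two contributions at order $\beta_1^2$: the completed-square term $-\tfrac{(4B^-\beta_1)^2}{4(B^+-B^-)}=-\tfrac{4(B^-)^2}{B^+-B^-}\beta_1^2$ from the linear coefficient, and the genuinely quadratic shift term $-4B^-\beta_1^2$ produced by the $4\beta_1^2$ in $u^2$. Combining them gives $M(\beta_1,0)=-\tfrac{4B^+B^-}{B^+-B^-}\beta_1^2+\mathcal{O}(\beta_1^3)$, whence
\[
\psi_1(\beta_1)=\frac{4B^+B^-}{(A_2^+-A_2^-)(B^+-B^-)}\beta_1^2+\mathcal{O}(\beta_1^3).
\]
Substituting the closed forms (\ref{ceiouncs34}) and (\ref{cewrjcsdk}) for $B^\pm$, $A_2^+-A_2^-$ and $B^+-B^-$, and using $\lambda^-(0)=\lambda^+(0)/\lambda(0)$ from (\ref{cdnf45t54}) to eliminate $\lambda^\pm(0)$, this simplifies exactly to $2\lambda(0)g^+_x(0,0;0)\beta_1^2/\big((\lambda(0)-1)^2 f^+(0,0;0)\big)$, giving (\ref{ew84sd}); note the coefficient is positive since $\lambda(0)\in(0,1)$, $g^+_x(0,0;0)>0$ and $f^+(0,0;0)>0$.

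The hard part is the coefficient computation, because it is the only place where a genuinely second-order-in-$\beta$ quantity enters. One must justify that, apart from the explicit shift term $-4B^-\beta_1^2$, the remaining $\mathcal{O}(\|\beta\|^2)$ contributions to $\mathcal{D}(0;\beta)$ do not affect the $\beta_1^2$ coefficient; this is precisely where the $\mathbb{Z}_2$-symmetry relating $\mathcal{D}^+$ and $\mathcal{D}^-$ (the same structure that already forced $A_1^+=A_1^-$ and the clean differences (\ref{cewrjcsdk})) is invoked, so that the second-order terms of the two half-maps cancel in the difference. A secondary technical point, needed to transfer these zeros into the crossing-cycle statements of Theorem~\ref{mainthm}, is to verify that the vertex $X(\beta)$ lies interior to the domain $\mathcal{I}$ exactly in the relevant regime $\beta_1<0$; using $B^->B^+>0$ one checks $X(\beta)>-2\beta_1$, so that for $\beta_2$ just below $\psi_1(\beta_1)$ both roots fall inside $\mathcal{I}$.
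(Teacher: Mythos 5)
Your reduction and trichotomy argument is essentially the paper's own proof in different clothing: your critical point $X(\beta)$ and vertex value $M(\beta)$ play exactly the role of the paper's shift $\rho(\beta)$ (with $X=-\rho$) and its discriminant function $\varpi(\beta)$, and your coefficient formula $\psi_1(\beta_1)=\frac{4B^+B^-}{(A_2^+-A_2^-)(B^+-B^-)}\beta_1^2+\mathcal{O}(\beta_1^3)$, with the same algebraic reduction via (\ref{ceiouncs34}), (\ref{cewrjcsdk}) and (\ref{cdnf45t54}), is what the paper obtains. Statements (1)--(3) and the quadratic tangency of $\psi_1$ do follow from your argument, since they only use $A_1^+=A_1^-$, $A_2^+-A_2^-<0$ and $B^+-B^-<0$.

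The genuine gap is the step you yourself call ``the hard part'', and the mechanism you propose for it would fail. You need that, on $\beta_2=0$, the constant (in $x$) part of $\mathcal{D}$ receives no $\beta_1^2$ contribution beyond the explicit shift term $-4B^-\beta_1^2$, i.e.\ that the $\beta_1^2$-coefficients of $\mathcal{D}^+(0;\beta)$ and $\mathcal{D}^-(0;\beta)$ cancel. This cannot come from the $\mathbb{Z}_2$-symmetry: $\mathcal{D}^+$ and $\mathcal{D}^-$ are the forward- and backward-time transition maps of the \emph{same} vector field $\widetilde Z^+$; the symmetry relates $\widetilde Z^+$ to $\widetilde Z^-$ and has already been spent in the definition (\ref{ckjrie562}) of $\mathcal{D}$ (the reflected argument $-2\beta_1-x$), so it imposes no relation whatsoever between $\mathcal{D}^+$ and $\mathcal{D}^-$. (Likewise, $A_1^+=A_1^-$ and the clean differences (\ref{cewrjcsdk}) are not symmetry identities; they follow from $\tau_0^+=\tau_0^-+T_0$ and periodicity of $\gamma_0$, via $\kappa_i^-=\kappa_i^+-\kappa_i\lambda^-(0)$, $\nu^-=\nu^+-\nu\lambda^-(0)$.) What the paper uses instead, as the very first line of its proof, is the identity (\ref{894uncds}): $\mathcal{D}^+(0;\beta)=\mathcal{D}^-(0;\beta)$ whenever $\beta_2=0$, to all orders in $\beta_1$. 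This holds because $\beta_2$ was constructed in Lemmas~\ref{csin34f} and \ref{standardcy} precisely as the height of the limit cycle of $\widetilde Z^+$ above its fold: at $\beta_2=0$ that cycle passes through $O$, so the forward and the backward orbit of $\widetilde Z^+$ from $O$ lie on one and the same closed orbit and hit $\Pi$ at the same point. With (\ref{894uncds}) in hand, the entire non-shift constant part of $\mathcal{D}$ is divisible by $\beta_2$ and equals $(A_2^+-A_2^-+\mathcal{O}(\|\beta\|))\beta_2$, and your vertex computation closes; without it, the leading coefficient in (\ref{ew84sd}) --- which is exactly what is needed later for the ordering (\ref{cewr324fwe4}) of the bifurcation curves --- remains unjustified.
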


\begin{proof}
From Lemmas~\ref{csdnewr} and \ref{standardcy},
\begin{equation}\label{894uncds}
\mathcal{D}^+(0;\beta)=\mathcal{D}^-(0;\beta)\qquad {\rm if}\quad \beta_2=0.
\end{equation}
This together with (\ref{clksmf23}) concludes
\begin{equation}\label{ei34vm90}
\begin{aligned}
\mathcal{D}(x;\beta)=&~\mathcal{D}^+(x;\beta)-\mathcal{D}^-(-2\beta_1-x;\beta)\\
=&~(A_2^+-A_2^-+\mathcal{O}(\|\beta\|))\beta_2+\mathcal{O}(\|\beta\|^2)x-\mathcal{O}(\|\beta\|^2)(-2\beta_1-x)\\
&+(B^++\mathcal{O}\left(\|\beta\|\right))x^2-(B^-+\mathcal{O}\left(\|\beta\|\right))(-2\beta_1-x)^2+\mathcal{O}(x^3)
+\mathcal{O}((-2\beta_1-x)^3)\\
=&~(A_2^+-A_2^-+\mathcal{O}(\|\beta\|))\beta_2-(4B^-\beta_1+\mathcal{O}(\|\beta\|^2))\beta_1-(4B^-\beta_1
+\mathcal{O}(\|\beta\|^2))x\\
&+(B^+-B^-+\mathcal{O}(\|\beta\|))x^2+\mathcal{O}(x^3).
\end{aligned}
\end{equation}

Taking a linear shift
$\tilde x=I(x;\beta):=x+\rho,$
where $\rho=\rho(\beta)$ is a priori unknown function that will be determined later, from (\ref{ei34vm90}) we get
\begin{equation}\label{euwcd}
\begin{aligned}
\mathcal{D}\circ I^{-1}(\tilde x;\beta)=&~(A^+_2-A^-_2+\mathcal{O}(\|\beta\|))\beta_2-(4B^-\beta_1+\mathcal{O}(\|\beta\|^2))\beta_1
-(4B^-\beta_1+\mathcal{O}(\|\beta\|^2))(\tilde x-\rho)\\
&+(B^+-B^-+\mathcal{O}(\|\beta\|))(\tilde x-\rho)^2+\mathcal{O}((\tilde x-\rho)^3)\\
=&~(A^+_2-A^-_2+\mathcal{O}(\|\beta\|))\beta_2-(4B^-\beta_1+\mathcal{O}(\|\beta\|^2))\beta_1
+(4B^-\beta_1+\mathcal{O}(\|\beta\|^2))\rho\\
&+(B^+-B^-+\mathcal{O}(\|\beta\|))\rho^2+\mathcal{O}(\rho^3)
-\Big\{4B^-\beta_1+\mathcal{O}(\|\beta\|^2)\\
&+2(B^+-B^-+\mathcal{O}(\|\beta\|))\rho+\mathcal{O}(\rho^2)\Big\}\tilde x
+(B^+-B^-+\mathcal{O}(\|\beta\|)+\mathcal{O}(\|\rho\|))\tilde x^2+\mathcal{O}(\tilde x^3).
\end{aligned}
\end{equation}
Let $H(\beta,\rho)$ be the linear term of (\ref{euwcd}) with respect to $\tilde x$. Then
$$H(0,0)=0,\quad \frac{\partial H(0,0)}{\partial\rho}=-2(B^+-B^-),\quad \frac{\partial H(0,0)}{\partial\beta_1}=-4B^-,\quad \frac{\partial H(0,0)}{\partial\beta_2}=0.$$
Due to $B^+-B^-<0$ in (\ref{cksdj4534}), there exists a neighborhood $V_{51}\subset V_4$ of $\beta=0$ and a smooth function
\begin{equation}\label{ceiwruncs}
\rho(\beta)=-\frac{2B^-}{B^+-B^-}\beta_1+\mathcal{O}(\|\beta\|^2)
\end{equation}
defined in $V_{51}$ such that $\rho(0)=0$ and $H(\beta,\rho(\beta))=0$ by the Implicit Function Theorem.

Substituting (\ref{ceiwruncs}) into (\ref{euwcd}), we obtain
$$
\begin{aligned}
\mathcal{D}\circ I^{-1}(\tilde x;\beta)
=(A^+_2-A^-_2+\mathcal{O}(\|\beta\|))\beta_2-\frac{4B^+B^-}{B^+-B^-}\beta_1^2+\mathcal{O}(\|\beta\|^3)+(B^+-B^-+\mathcal{O}(\|\beta\|))\tilde x^2+\mathcal{O}(\tilde x^3).
\end{aligned}
$$
Regarding $\tilde x^2$ as a new variable and applyng the Implicit Function Theorem, there exists a neighborhood $V_{52}\subset V_{51}$ of $\beta=0$ and a smooth function
\begin{equation}\label{12idkvfdgr}
\begin{aligned}
\varpi(\beta)&=-\frac{A^+_2-A^-_2}{B^+-B^-}\beta_2+\frac{4B^+B^-}{(B^+-B^-)^2}\beta_1^2+\mathcal{O}(\|\beta\|)\beta_2+\mathcal{O}(\|\beta\|)\beta_1^2\\
\end{aligned}
\end{equation}
defined in $V_{52}$ such that $\mathcal{D}\circ I^{-1}(\tilde x;\beta)$ has no zeros in $(-\epsilon_1,\epsilon_1)$ if $\varpi(\beta)<0$,
a unique zero $\tilde x_*(\beta)=0$ in $(-\epsilon_1,\epsilon_1)$ if $\varpi(\beta)=0$, and exactly two zeros
$\tilde x_\pm(\beta)=\pm\sqrt{\varpi(\beta)}$ in $(-\epsilon_1,\epsilon_1)$ if $\varpi(\beta)>0$.

Consider the equation $\varpi(\beta)=0$. It follows from (\ref{cksdj4534}) and the Implicit Function Theorem that there exists a neighborhood $V_{5}\subset V_{52}$ of $\beta=0$ and a smooth function $\psi_1(\beta_1)$ defined for $(\beta_1,\psi_1(\beta_1))\in V_{5}$ such that $\psi_1(0)=0$ and $\varpi(\beta_1,\psi_1(\beta_1))=0$. In addition, the expansion of $\psi_1(\beta_1)$ around $\beta_1=0$ can be written in the form (\ref{ew84sd}) from (\ref{ceiouncs34}), (\ref{cewrjcsdk}) and (\ref{cdnf45t54}).
Since $(A^+_2-A^-_2)/(B^+-B^-)>0$ from (\ref{cksdj4534}), $\varpi(\beta)<0$ (resp. $>0$) is equivalent to $\beta_2>\psi_1(\beta_1)$ (resp. $<\psi_1(\beta_1)$).
Thus $\mathcal{D}(x;\beta)$ has no zeros in $(-\epsilon_1,\epsilon_1)$ if $\beta_2>\psi_1(\beta_1)$, a unique zero
\begin{equation}\label{ckkfjew}
x_*=\tilde x_*-\rho(\beta)=-\rho(\beta)
\end{equation}
in $(-\epsilon_1,\epsilon_1)$ if $\beta_2=\psi_1(\beta_1)$, and exactly two zeros
\begin{equation}\label{cw32kfjew}
x_\pm=\tilde x_\pm-\rho(\beta)=\pm\sqrt{\varpi(\beta)}-\rho(\beta)
\end{equation}
in $(-\epsilon_1,\epsilon_1)$ if $\beta_2<\psi_1(\beta_1)$.
Finally, due to $\partial I^{-1}(\tilde x;\beta)/\partial \tilde x=1$ and $B^+-B^-<0$ in (\ref{cksdj4534}),
$$
\begin{aligned}
&\frac{\partial \mathcal{D}(x_*;\beta)}{\partial x}=0,\qquad \frac{\partial^2 \mathcal{D}(x_*;\beta)}{\partial x^2}=2(B^+-B^-)+\mathcal{O}(\|\beta\|)<0,\\ &\frac{\partial \mathcal{D}(x_\pm;\beta)}{\partial x}=\pm\Big(2(B^+-B^-)+\mathcal{O}(\|\beta\|)\Big)\sqrt{\varpi(\beta)}+\mathcal{O}(\varpi(\beta))\lessgtr0.
\end{aligned}
$$
The proof of Lemma~\ref{ceriucs} is completed.
\end{proof}

In the following lemma we determine the values of $\beta$ for which the zeros of $\mathcal{D}(x;\beta)$ belong to $\mathcal{I}:=[\max\{-2\beta_1,0\},\epsilon_1)$.
\begin{lm}\label{98hcsdj}
Consider the map $\mathcal{D}(x;\beta): (-\epsilon_1,\epsilon_1)\times V_4\rightarrow\mathbb{R}$ constructed in $(\ref{ckjrie562})$ and its zeros $x_*$ existing for $\beta_2=\psi_1(\beta_1)$ and $x_\pm$ existing for $\beta_2<\psi_1(\beta_1)$ given in $(\ref{ckkfjew})$ and $(\ref{cw32kfjew})$ respectively. Then there is a neighborhood $V_6\subset V_5$ of $\beta=0$ such that following statements hold for $\beta\in V_6$.
\vspace{-14pt}
\begin{itemize}
\setlength{\itemsep}{0mm}
\item[{\rm(1)}] $x_*\notin\mathcal{I}$ if $\beta_1>0$, $x_*\in\mathcal{I}^o$ if $\beta_1<0$, and $x_*=0\in\mathcal{I}$ if $\beta_1=0$.
\item[{\rm(2)}] There exists a smooth function
 \begin{equation}\label{csk35454}
 \psi_2(\beta_1)=-\frac{2\lambda(0)g^+_x(0,0;0)}{(\lambda(0)-1)f^+(0,0;0)}\beta_1^2+\mathcal{O}(\beta_1^3)
 \end{equation}
  defined for $(\beta_1,\psi_2(\beta_1))\in V_6$ such that $x_-\notin\mathcal{I}$ if $\beta_1<0$ and $\beta_2<\psi_2(\beta_1)$ or $\beta_1\ge0$, $x_-=-2\beta_1\in\mathcal{I}$ if $\beta_1<0$ and $\beta_2=\psi_2(\beta_1)$, and $x_-\in\mathcal{I}^o$ if $\beta_1<0$ and $\beta_2>\psi_2(\beta_1)$.
\item[{\rm(3)}] There exists a smooth function
 \begin{equation}\label{csk3545dafa}
 \psi_3(\beta_1)=\frac{2g^+_x(0,0;0)}{(\lambda(0)-1)f^+(0,0;0)}\beta_1^2+\mathcal{O}(\beta_1^3)
 \end{equation}
defined for $(\beta_1,\psi_3(\beta_1))\in V_6$ such that $x_+\in\mathcal{I}^o$ if $\beta_1>0$ and $\beta_2<\psi_3(\beta_1)$ or $\beta_1\le0$, $x_+=0\in\mathcal{I}$ if $\beta_1>0$ and $\beta_2=\psi_3(\beta_1)$, $x_+\notin\mathcal{I}$ if $\beta_1>0$ and $\beta_2>\psi_3(\beta_1)$.
\end{itemize}
\end{lm}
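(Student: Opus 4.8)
The plan is to reduce all three items to comparing the already-located zeros of $\mathcal{D}(x;\beta)$ against the two endpoints of $\mathcal{I}=[\max\{-2\beta_1,0\},\epsilon_1)$, namely $0$ and $-2\beta_1$, whose roles as the \emph{left} endpoint interchange with the sign of $\beta_1$. From the proof of Lemma~\ref{ceriucs} I have the explicit forms $x_*=-\rho(\beta)$ in (\ref{ckkfjew}) and $x_\pm=\pm\sqrt{\varpi(\beta)}-\rho(\beta)$ in (\ref{cw32kfjew}), together with $\rho(\beta)=-\frac{2B^-}{B^+-B^-}\beta_1+\mathcal{O}(\|\beta\|^2)$ from (\ref{ceiwruncs}) and the expansion of $\varpi(\beta)$ in (\ref{12idkvfdgr}). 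The whole argument is then a sign analysis driven by (\ref{cksdj4534}), i.e. $B^\pm>0$, $B^+-B^-<0$ and $A_2^+-A_2^-<0$. Since $x_*,x_\pm=\mathcal{O}(\sqrt{\|\beta\|})$, the upper endpoint $\epsilon_1$ is never binding after shrinking, so only the left endpoint matters.

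For item (1), note that $x_*$ exists only on $\beta_2=\psi_1(\beta_1)$, where $\psi_1(\beta_1)=\mathcal{O}(\beta_1^2)$ gives $\|\beta\|\sim|\beta_1|$; hence $x_*=\frac{2B^-}{B^+-B^-}\beta_1+\mathcal{O}(\beta_1^2)$ and $x_*+2\beta_1=\frac{2B^+}{B^+-B^-}\beta_1+\mathcal{O}(\beta_1^2)$. Both leading coefficients are negative by (\ref{cksdj4534}), so for $\beta_1>0$ one gets $x_*<0\le\max\{-2\beta_1,0\}$, hence $x_*\notin\mathcal{I}$; for $\beta_1<0$ one gets $x_*>0$ and $x_*>-2\beta_1$, hence $x_*\in\mathcal{I}^o$; and $\rho(0)=0$ forces $x_*=0\in\mathcal{I}$ when $\beta_1=0$.

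For items (2) and (3) I would define the critical curves by the vanishing of $x_-+2\beta_1$ and of $x_+$ and invoke the Implicit Function Theorem. Writing $x_-=-2\beta_1$ as $\sqrt{\varpi}=2\beta_1-\rho$ and $x_+=0$ as $\sqrt{\varpi}=\rho$ — both right-hand sides being positive to leading order for $\beta_1<0$ and $\beta_1>0$ respectively — I square to get $\varpi(\beta)=(2\beta_1-\rho)^2$ and $\varpi(\beta)=\rho^2$. Because $\partial\varpi/\partial\beta_2=-\frac{A_2^+-A_2^-}{B^+-B^-}+\mathcal{O}(\|\beta\|)\ne0$ by (\ref{12idkvfdgr}) and (\ref{cksdj4534}), each equation is solvable for $\beta_2$ as a smooth $\psi_2(\beta_1)$, resp. $\psi_3(\beta_1)$, vanishing at $\beta_1=0$; inserting (\ref{ceiouncs34}), (\ref{cewrjcsdk}) and (\ref{cdnf45t54}) (in particular $B^-=\frac{g^+_x(0,0;0)\lambda^-(0)}{2g^+(a,b;0)}$ and $\lambda^-(0)=\lambda^+(0)/\lambda(0)$) into the leading coefficients reproduces (\ref{csk35454}) and (\ref{csk3545dafa}). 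The in/out dichotomy follows from a difference-of-squares factorization: the sign of $x_-+2\beta_1$ (resp. $x_+$) equals that of $(2\beta_1-\rho)^2-\varpi$ (resp. $\varpi-\rho^2$), since the companion factor is positive; monotonicity in $\beta_2$ (via $\partial\varpi/\partial\beta_2<0$) then fixes which side of $\psi_2$, resp. $\psi_3$, is positive. The leftover sign regimes ($\beta_1\ge0$ for $x_-$, $\beta_1\le0$ for $x_+$) are settled directly using $\rho\ge0$ resp. $2\beta_1-\rho>0$ against $\sqrt{\varpi}>0$, with the degenerate direction $\beta_1=0$ handled by letting $\mp\sqrt{\varpi}$ carry the sign.

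The main obstacle is bookkeeping rather than conceptual, and two points need care. First, one must check that $\psi_2,\psi_3$ lie below $\psi_1$, so that the zeros $x_\pm$ genuinely exist where these curves are defined; this reduces to comparing leading coefficients and uses $0<\lambda(0)<1$, giving $\psi_2(\beta_1)=(1-\lambda(0))\psi_1(\beta_1)+\cdots<\psi_1(\beta_1)$ and $\psi_3<0<\psi_1$. Second, since all comparisons occur at second order in $\beta_1$ and the curves are mutually quadratically tangent, I must verify that the $\mathcal{O}(\|\beta\|^3)$ remainders in $\varpi$, $\rho$ and the $\psi_i$ do not overturn the leading signs on a full punctured neighborhood of $\beta=0$; shrinking to $V_6\subset V_5$ and treating the degenerate directions ($\beta_1=0$ and motion along each $\psi_i$) separately is precisely what makes the estimates uniform.
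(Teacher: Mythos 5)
Your proposal is correct, and its skeleton is the same as the paper's: reduce all three items to comparing $x_*=-\rho$ and $x_\pm=\pm\sqrt{\varpi}-\rho$ with the left endpoint of $\mathcal{I}$, drive the signs with (\ref{cksdj4534}), and produce $\psi_2,\psi_3$ by the Implicit Function Theorem. Your item (1) is literally the paper's argument, since $\tfrac{2B^-}{B^+-B^-}=\tfrac{2}{\lambda(0)-1}$ and $\tfrac{2B^+}{B^+-B^-}=\tfrac{2\lambda(0)}{\lambda(0)-1}$. Where you genuinely diverge is in the construction of the critical curves. The paper substitutes $\sigma:=\sqrt{\psi_1(\beta_1)-\beta_2}$ and works in $(\beta_1,\sigma)$-coordinates, where $x_+$ and $x_-+2\beta_1$ become smooth functions $K_\pm(\beta_1,\sigma)$ with nondegenerate linear parts; it solves $K_\pm=0$ for $\sigma_\pm(\beta_1)$ and sets $\psi_{2,3}=\psi_1-\sigma_\mp^2$, after which every sign claim is a dominant-linear-term estimate in the quadrant $\{\sigma>0\}$, uniformly valid by construction. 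You instead square the endpoint equations to $\varpi=(2\beta_1-\rho)^2$ and $\varpi=\rho^2$ and apply the IFT in $(\beta_1,\beta_2)$ directly; this is equally legitimate (squaring loses no information because $2\beta_1-\rho>0$ for $\beta_1<0$ and $\rho>0$ for $\beta_1>0$ near the curves, and it yields $\psi_{2,3}\le\psi_1$ for free since $\varpi$ equals a square there), your leading coefficients check out against (\ref{csk35454})--(\ref{csk3545dafa}), and the difference-of-squares dichotomy combined with $\partial\varpi/\partial\beta_2<0$ is sound. What your route costs is precisely the uniformity issue you flag at the end: a claim like ``$\rho\ge0$ for $\beta_1\ge0$'' is not uniformly true, because for $0<\beta_1\lesssim\beta_2^2$ the $\mathcal{O}(\|\beta\|^2)$ remainder in (\ref{ceiwruncs}) can overturn the linear term, so in the leftover regimes you must fall back on the dominance of $\sqrt{\varpi}\sim\sqrt{\psi_1(\beta_1)-\beta_2}$ over $|\rho|$, exactly as you do for $\beta_1=0$; the paper's $\sigma$-substitution makes that dominance automatic and is the cleaner way to make your estimates uniform on a full neighborhood $V_6$.
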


\begin{proof}
Consider the zero $x_*$ existing for $\beta_2=\psi_1(\beta_1)$. We write
$$x_*=\frac{2}{\lambda(0)-1}\beta_1+\mathcal{O}(\beta_1^2)$$
by (\ref{ceiouncs34}), (\ref{cewrjcsdk}), (\ref{ceiwruncs}) and (\ref{ckkfjew}). If $\beta_1>0$, then $\mathcal{I}=[0,\epsilon_1)$, and it follows from $\lambda(0)<1$ that $x_*<0$, and then $x_*\notin\mathcal{I}$. If $\beta_1<0$, then $\mathcal{I}=[-2\beta_1,\epsilon_1)$, and
it follows from $0<\lambda(0)<1$ that
${2}/(\lambda(0)-1)<-2,$
and then $x_*\in\mathcal{I}^o$ in a sufficiently small neighborhood of $\beta=0$. If $\beta_1=0$, then $\mathcal{I}=[0,\epsilon_1)$ and $x_*=0$, which gives $x_*\in\mathcal{I}$.  That is, statement (1) holds.

Consider the zero $x_\pm$ existing for $\beta_2<\psi_1(\beta_1)$. By (\ref{ceiouncs34}), (\ref{cewrjcsdk}), (\ref{ceiwruncs}), (\ref{12idkvfdgr}) and (\ref{cw32kfjew}) we write
\begin{equation}\label{48sffjds}
x_\pm=\pm\sqrt{-\frac{2f^+(0,0;0)}{g^+_x(0,0;0)}\beta_2+\mathcal{O}(\beta_2^2)}+\mathcal{O}(\beta_2^2)
\end{equation}
for $\beta_1=0$, and
\begin{equation}\label{48fjds}
x_\pm=\pm\left(\frac{2f^+(0,0;0)}{g^+_x(0,0;0)}+\mathcal{O}(\beta_1)\right)^{\frac{1}{2}}\sigma
+\frac{2}{\lambda(0)-1}\beta_1+\mathcal{O}\left(\|\beta_1,\sigma)\|^2\right)
\end{equation}
for $\beta_1\ne0$, where
$$\sigma:=\sqrt{\psi_1(\beta_1)-\beta_2}.$$

If $\beta_1=0$, it is direct to get $x_-<0$ for $\beta_2<0$ sufficiently closed to $0$ from (\ref{48sffjds}).
If $\beta_1>0$, due to $\sigma>0$ and $\lambda(0)<1$, we still have $x_-<0$ in a sufficiently small neighborhood of $(\beta_1,\sigma)=(0,0)$ from (\ref{48fjds}). Thus, in the case of $\beta_1\ge0$, $x_-\notin\mathcal{I}$ due to $\mathcal{I}=[0,\epsilon_1)$.
If $\beta_1<0$, we consider
$$
K_-(\beta_1,\sigma):=x_-+2\beta_1=-\left(\frac{2f^+(0,0;0)}{g^+_x(0,0;0)}+\mathcal{O}(\beta_1)\right)^{\frac{1}{2}}\sigma
+\frac{2\lambda(0)}{\lambda(0)-1}\beta_1+\mathcal{O}\left(\|(\beta_1,\sigma)\|^2\right).
$$
By the Implicit Function Theorem the equation $K_-(\beta_1,\sigma)$ determines a smooth function $\sigma_-(\beta_1)$, defined in a sufficiently small neighborhood of $\beta_1=0$, such that $\sigma_-(0)=0$ and $K_-(\beta_1,\sigma_-(\beta_1))=0$. Moreover,
\begin{equation}\label{68j3iudkscs}
\sigma_-(\beta_1)=\frac{2\lambda(0)}{\lambda(0)-1}\left(\frac{g^+_x(0,0;0)}{2f^+(0,0;0)}\right)^{\frac{1}{2}}\beta_1
+\mathcal{O}(\beta_1^2),
\end{equation}
and $K_-(\beta_1,\sigma)>0$ (resp. $<0$) if $\sigma<\sigma_-(\beta_1)$ (resp. $>\sigma_-(\beta_1)$).
Taking
$\psi_2(\beta_1)=\psi_1(\beta_1)-\sigma_-^2(\beta_1)$,
we can verify that $\psi_2(\beta_1)$ is smooth by the smoothness of $\psi_1(\beta_1)$ and $\sigma_-(\beta_1)$, and that its expansion is (\ref{csk35454})
by (\ref{ew84sd}) and (\ref{68j3iudkscs}).
Moreover, $K_-(\beta_1,\sigma)=x_-+2\beta_1=0$ (resp. $>0, <0$) if $\beta_2-\psi_2(\beta_1)=0$ (resp. $>0, <0$), according to the definitions of $\sigma$ and $\psi_2(\beta_1)$.
Note that $\mathcal{I}=[-2\beta_1,\epsilon_1)$ if $\beta_1<0$. Consequently, the above analysis allows us to choose a neighborhood $V_6\subset V_5$ of $\beta=0$ such that statement (2) holds.

If $\beta_1=0$, then $x_+\in\mathcal{I}^o=(0,\epsilon_1)$ for $\beta_2<0$ sufficiently closed to $0$ from (\ref{48sffjds}). If $\beta_1<0$, it follows from (\ref{48fjds}), $\sigma>0$ and $0<\lambda(0)<1$ that
$$x_++2\beta_1=\left(\frac{2f^+(0,0;0)}{g^+_x(0,0;0)}+\mathcal{O}(\beta_1)\right)^{\frac{1}{2}}\sigma
+\frac{2\lambda(0)}{\lambda(0)-1}\beta_1+\mathcal{O}\left(\|(\beta_1,\sigma)\|^2\right)>0,$$
which implies that $x_+\in\mathcal{I}^0$ due to $\mathcal{I}=[-2\beta_1,\epsilon_1)$.
If $\beta_1>0$, we consider
$$
K_+(\beta_1,\sigma):=x_+=\left(\frac{2f^+(0,0;0)}{g^+_x(0,0;0)}+\mathcal{O}(\beta_1)\right)^{\frac{1}{2}}\sigma
+\frac{2}{\lambda(0)-1}\beta_1+\mathcal{O}\left(\|(\beta_1,\sigma)\|^2\right).
$$
By the Implicit Function Theorem the equation $K_+(\beta_1,\sigma)=0$ determines a smooth function $\sigma_+(\beta_1)$, defined in a sufficiently small neighborhood of $\beta_1=0$, such that $\sigma_+(0)=0$ and $K_+(\beta_1,\sigma_+(\beta_1))=0$. Moreover,
\begin{equation}\label{3iudkscs}
\sigma_+(\beta_1)=-\frac{2}{\lambda(0)-1}\left(\frac{g^+_x(0,0;0)}{2f^+(0,0;0)}\right)^{\frac{1}{2}}\beta_1+
\mathcal{O}(\beta_1^2),
\end{equation}
and $K_+(\beta_1,\sigma)>0$ (resp. $<0$) if $\sigma>\sigma_+(\beta_1)$ (resp. $<\sigma_+(\beta_1)$).
Taking
$\psi_3(\beta_1)=\psi_1(\beta_1)-\sigma_+^2(\beta_1),$
we get that $\psi_3(\beta_1)$ is smooth by the smoothness of $\psi_1(\beta_1)$ and $\sigma_+(\beta_1)$, and that its expansion is (\ref{csk3545dafa})
by (\ref{ew84sd}) and (\ref{3iudkscs}). Moreover,
$K_+(\beta_1,\sigma)=x_+=0$ (resp. $>0, <0$) if $\beta_2-\psi_3(\beta_1)=0$ (resp. $<0, >0$),
according to the definitions of $\sigma$ and $\psi_3(\beta_1)$.
Note that $\mathcal{I}=[0,\epsilon_1)$ if $\beta_1>0$. Consequently, the above analysis allows us to choose a neighborhood $V_6\subset V_5$ of $\beta=0$ such that statement (3) holds.
\end{proof}

According to the statements (3)-(5) of Lemma~\ref{hdcshu}, the next lemma determines the values of $\beta$ for which sliding homoclinic orbits or sliding cycles do exit.

\begin{lm}\label{cdsk3454}
Let
$$
P(\beta_1,\beta_2):=\mathcal{D}^+(-\beta_1;\beta)-\mathcal{D}^-(0;\beta), \qquad Q(\beta_1,\beta_2):=\mathcal{D}^+(0;\beta)-\mathcal{D}^-(-\beta_1;\beta)
$$
for $\beta\in V_4$. Then there exists a neighborhood $V_7\subset V_4$ of $\beta=0$ such that the following statements hold for $\beta\in V_7$.
\vspace{-14pt}
\begin{itemize}
\setlength{\itemsep}{0mm}
\item[{\rm(1)}] There exists a smooth function
\begin{equation}\label{ww9387fvdf}
\psi_4(\beta_1)=-\frac{\lambda(0)g^+_x(0,0;0)}{2(\lambda(0)-1)f^+(0,0;0)}\beta_1^2+\mathcal{O}(\beta_1^3)
\end{equation}
 defined for $(\beta_1,\psi_4(\beta_1))\in V_7$ such that $\psi_4(0)=0$ and $P(\beta_1,\psi_4(\beta_1))=0$. Moreover,
 $P(\beta_1,\beta_2)>0$ $($resp. $<0$$)$ if $\beta_2<\psi_4(\beta_1)$ $($resp. $>\psi_4(\beta_1)$$)$.
\item[{\rm(2)}] There exists a smooth function
\begin{equation}\label{www32irpcds}
\psi_5(\beta_1)=\frac{g^+_x(0,0;0)}{2(\lambda(0)-1)f^+(0,0;0)}\beta_1^2+\mathcal{O}(\beta_1^3)
 \end{equation}
 defined for $(\beta_1,\psi_5(\beta_1))\in V_7$ such that $\psi_5(0)=0$ and $Q(\beta_1,\psi_5(\beta_1))=0$. Moreover,
    $Q(\beta_1,\beta_2)>0$ $($resp. $<0$$)$ if $\beta_2<\psi_5(\beta_1)$ $($resp. $>\psi_5(\beta_1)$$)$.
\end{itemize}
\end{lm}

\begin{proof}
By (\ref{clksmf23}) and (\ref{894uncds}),
$$P(\beta_1,\beta_2)=(A^+_2-A^-_2+\mathcal{O}\left(\|\beta\|\right))\beta_2+(B^++\mathcal{O}\left(\|\beta\|\right))\beta_1^2
+\mathcal{O}(\beta_1^3).$$
Clearly, $P(0,0)=0$ and $\partial P(0,0)/\partial\beta_2=A^+_2-A^-_2<0$ from (\ref{cksdj4534}). Therefore, by the Implicit Function Theorem there exists a neighborhood $V_7\subset V_4$ of $\beta=0$ and a smooth function $\psi_4(\beta_1)$ defined for $(\beta_1,\psi_4(\beta_1))\in V_7$ such that $\psi_4(0)=0$ and $P(\beta_1,\psi_4(\beta_1))=0$. Moreover, we get (\ref{ww9387fvdf}) from (\ref{ceiouncs34}) and (\ref{cewrjcsdk}). Due to $A^+_2-A^-_2<0$, $P(\beta_1,\beta_2)>0$ $($resp. $<0$$)$ if $\beta_2<\psi_4(\beta_1)$ $($resp. $>\psi_4(\beta_1)$$)$. Statement (1) holds.

By (\ref{clksmf23}) and (\ref{894uncds}) again,
$$Q(\beta_1,\beta_2)=(A^+_2-A^-_2+\mathcal{O}\left(\|\beta\|\right))\beta_2-(B^-+\mathcal{O}\left(\|\beta\|\right))\beta_1^2+\mathcal{O}(\beta_1^3).$$
A similar analysis yields that there exists a neighborhood $V_7\subset V_4$ of $\beta=0$ and a smooth function $\psi_5(\beta_1)$ defined for $(\beta_1,\psi_5(\beta_1))\in V_7$ such that $\psi_5(0)=0$ and $Q(\beta_1,\psi_5(\beta_1))=0$. Moreover, we get (\ref{www32irpcds}) from (\ref{ceiouncs34}) and (\ref{cewrjcsdk}) again.
Due to $A^+_2-A^-_2<0$, $Q(\beta_1,\beta_2)>0$ $($resp. $<0$$)$ if $\beta_2<\psi_5(\beta_1)$ $($resp. $>\psi_5(\beta_1)$$)$. Statement (2) holds.
\end{proof}

Now we are in a suitable position to give the proof of Theorem~\ref{mainthm}.

\begin{proof}[{\bf Proof of Theorem~\ref{mainthm}}]
For a sufficiently small figure eight annulus neighborhood $\mathcal{A}$ of $\Upsilon_0$ we let $U^*\subset U$ be the neighborhood of $\alpha=0$ such that $V^*:=(\varphi_1(U^*),\varphi_2(U^*))\subset V_6\cap V_7$, where $(\beta_1,\beta_2)=(\varphi_1(\alpha),\varphi_2(\alpha))$ are given in $(\ref{csk3434})$, $V_6$ and $V_7$ are given in Lemmas~\ref{98hcsdj} and \ref{cdsk3454} respectively. Under the condition (\ref{transversality}), $(\beta_1,\beta_2)=(\varphi_1(\alpha), \varphi_2(\alpha))$ is a diffeomorphism from $U^*$ to $V^*$ by Lemma~\ref{cewr343787}.
Let $\psi_i(\beta_1)$, $i=1,2,3,4,5$, be the functions given in Lemmas~\ref{ceriucs}, \ref{98hcsdj}, \ref{cdsk3454} respectively. Then $\psi_i(\beta_1)$, $i=1,2,3,4,5$, are defined well in $\varphi_1(U^*)$. Moreover, they satisfy (\ref{cewr324fwe4}) and are quadratically tangent to $\beta_2=0$ at $(0,0)$ from the corresponding expansions, since $f^+(0,0;0)>0$, $g^+_x(0,0;0)>0$ and $0<\lambda(0)<1$. Thus the curves $\beta_2=\psi_i(\beta_1)$ for $\beta_1<0$, $\beta_2=\psi_j(\beta_1)$ for $\beta_1>0$, $i=1,2,4$, $j=3,5$, and the lines $\beta_1=0$ and $\beta_2=0$ split $V^*$ into $9$ open regions. For each region and its boundary, the dynamics of $\widetilde Z(x,y;\beta)$ in $\mathcal{A}$ can be obtained by summarizing the following statements (i)-(ix) and the sliding dynamics stated in Lemma~\ref{csdnewr}.

Based on Lemma~\ref{standardcy}, we obtain the information on standard cycles and grazing cycles.
\vspace{-14pt}
\begin{itemize}
\setlength{\itemsep}{0mm}
\item[{\rm(i)}] If $\beta_2>0$, there exist exactly two standard cycles in $\mathcal{A}$, which are hyperbolically stable and $\mathbb{Z}_2$-symmetric with respect to $(-\beta_1,0)$.  If $\beta_2=0$, the two cycles become two internally stable grazing cycles in $\mathcal{A}$ that are $\mathbb{Z}_2$-symmetric with respect to $(-\beta_1,0)$ and the grazing cycle in $\Sigma^+$ grazes at $O$ and the other grazes at $(-2\beta_1,0)$, in particular, they form a figure eight loop if $\beta_1=0$. If $\beta_2<0$, there are no standard cycles and grazing cycles in $\mathcal{A}$.
\end{itemize}
\vspace{-14pt}

Based on (1) and (2) of Lemma~\ref{hdcshu}, Lemmas~\ref{ceriucs} and \ref{98hcsdj}, we obtain the information on crossing cycles and critical crossing cycles.
\vspace{-14pt}
\begin{itemize}
\setlength{\itemsep}{0mm}
\item[{\rm(ii)}] If $\beta_1<0$ and $\beta_2=\psi_1(\beta_1)$, there exists a unique crossing cycle, which is of multiplicity two and stable from the outside.
\item[{\rm(iii)}] If $\beta_1<0$ and $\psi_2(\beta_1)<\beta_2<\psi_1(\beta_1)$, there exist exactly two crossing cycles. The outside crossing cycle is hyperbolic and stable, and the inside one is hyperbolic and unstable. If $\beta_1<0$ and $\beta_2\le\psi_2(\beta_1)$, the outside crossing cycle persists but the inside one becomes an unstable critical crossing cycle from the outside for $\beta_2=\psi_2(\beta_1)$ and disappears for $\beta_2<\psi_2(\beta_1)$.
\item[{\rm(iv)}] If $\beta_1=0$ and $\beta_2<0$, there exists a unique crossing cycle, which is hyperbolic and stable.
\item[{\rm(v)}] If $\beta_1>0$ and $\beta_2<\psi_3(\beta_1)$, there exists a unique crossing cycle, which is hyperbolic and stable. Moreover, the crossing cycle becomes a stable critical crossing cycle from the outside if $\beta_1>0$ and $\beta_2=\psi_3(\beta_1)$ and then disappears if $\beta_1>0$ and $\beta_2>\psi_3(\beta_1)$.
\item[{\rm(vi)}] There exist no crossing cycles and critical crossing cycles for other values of parameters.
\end{itemize}
\vspace{-14pt}

Based on (3)-(5) of Lemma~\ref{hdcshu} and Lemma~\ref{cdsk3454}, we obtain the information on sliding cycles and sliding homoclinic orbits.
\vspace{-14pt}
\begin{itemize}
\setlength{\itemsep}{0mm}
\item[{\rm(vii)}] If $\beta_1>0$ and $\psi_5(\beta_1)<\beta_2<0$, there exist exactly two sliding cycles, which are stable, one-zonal and $\mathbb{Z}_2$-symmetric with respect to $(-\beta_1,0)$. They become two sliding homoclinic orbits to the pseudo-saddle $(-\beta_1,0)$ if $\beta_1>0$ and $\beta_2=\psi_5(\beta_1)$ and then become a stable two-zonal sliding cycle if $\beta_1>0$ and $\psi_3(\beta_1)<\beta_2<\psi_5(\beta_1)$.
\item[{\rm(viii)}] If $\beta_1<0$ and $0<\beta_2<\psi_4(\beta_1)$, there exist exactly two sliding cycles, which are unstable, one-zonal and $\mathbb{Z}_2$-symmetric with respect to $(-\beta_1,0)$. They become two sliding homoclinic orbits to the pseudo-saddle $(-\beta_1,0)$ if $\beta_1<0$ and $\beta_2=\psi_4(\beta_1)$ and then become an unstable two-zonal sliding cycle if $\beta_1>0$ and $\psi_4(\beta_1)<\beta_2<\psi_2(\beta_1)$.
\item[{\rm(ix)}] There exist no sliding cycles and sliding homoclinic orbits for other values of parameters.
\end{itemize}
\vspace{-14pt}

Eventually, we can horizontally translate all the phase portraits in the bifurcation diagram of $\widetilde Z(x,y;\alpha)$ to obtain the bifurcation diagram of $Z(x, y;\alpha)$ in $\beta$-plane. The proof of Theorem~\ref{mainthm} is finished.
\end{proof}

\section{Example}
\setcounter{equation}{0}
\setcounter{lm}{0}
\setcounter{thm}{0}
\setcounter{rmk}{0}
\setcounter{df}{0}
\setcounter{cor}{0}

In this section we show an example to realize the bifurcation described in Theorem~\ref{mainthm}.
Consider the following system
\begin{equation}\label{wdafddsdjnf}
\left(
\begin{array}{c}
\dot x\\
\dot y
\end{array}
\right)=
\left\{
\begin{aligned}
&\left(
\begin{array}{c}
y-(ax+bx^3)+\alpha_1+\alpha_2(x-x^2)\\
1-x
\end{array}
\right)\qquad &&{\rm for}\quad x>0,\\
&\left(
\begin{array}{c}
y-(ax+bx^3)-\alpha_1+\alpha_2(x+x^2)\\
-1-x
\end{array}
\right)\qquad &&{\rm for}\quad x<0.
\end{aligned}
\right.
\end{equation}
If $\alpha_1=\alpha_2=0$, system (\ref{wdafddsdjnf}) is the discontinuous limit case of a smooth oscillator
introduced in \cite{CWPGT}, which is derived from an archetypal system by Thompson and Hunt
\cite{TH} and is widely used in engineering.

Clearly, system (\ref{wdafddsdjnf}) is $\mathbb{Z}_2$-symmetric with respect to $O$. If $\alpha_1=\alpha_2=0$, it is easy to verify that $O$ is a visible fold-fold and, based on the results of \cite{CHB}, there exists a smooth function $\vartheta(a)$ in $(-\infty,0)$ such that for $a<0$ and $b=\vartheta(a)$ system (\ref{wdafddsdjnf}) has a figure eight loop kinking at the fold-fold $O$, which consists of a clockwise rotary, hyperbolic and stable limit cycle in $x\le0$ that graze $x=0$ at a unique point $O$ and its $\mathbb{Z}_2$-symmetric counterpart.

Using the linear change of variables $(x,y)\rightarrow(-y,-x)$, we transform system (\ref{wdafddsdjnf}) into
\begin{equation}\label{wdafddsdjsffnf}
\left(
\begin{array}{c}
\dot x\\
\dot y
\end{array}
\right)=
\left\{
\begin{aligned}
&\left(
\begin{array}{c}
1-y\\
x-(ay+by^3)+\alpha_1+\alpha_2(y-y^2)
\end{array}
\right)\qquad &&{\rm for}\quad y>0,\\
&\left(
\begin{array}{c}
-1-y\\
x-(ay+by^3)-\alpha_1+\alpha_2(y+y^2)
\end{array}
\right)\qquad &&{\rm for}\quad y<0.
\end{aligned}
\right.
\end{equation}
By the analysis of the last paragraph, for fixed $a<0$ and $b=\vartheta(a)$, system $(\ref{wdafddsdjsffnf})$ with $\alpha_1=\alpha_2=0$ has a figure eight loop characterized by {\bf(H1)}, {\bf(H2)} and $\lambda(0)<1$, i.e., the assumptions of Theorem~\ref{mainthm} hold.
Besides, restricted to system (\ref{wdafddsdjsffnf}), the condition (\ref{transversality}) holds because $g^+_{\alpha_1}(0,0;0)=1$, $g^+_{\alpha_2}(0,0;0)=0$ and
$$\kappa_2=\int^{T_0}_0\exp\left(-\int^{T_0}_ta+3by_0^2(s)ds\right)(1-y_0(t))^2y_0(t)dt>0,$$
due to $y_0(t)\ge0$ and $y_0(t)\not\equiv0, 1$, where $(x_0(t),y_0(t))$ is the solution of the unperturbed limit cycle in $y\ge0$ with $(x_0(0),y_0(0))=O$ and $T_0$ is the period.
Consequently, we conclude that there is a codimension-two grazing-sliding bifurcation as described in Theorem~\ref{mainthm}.

{\footnotesize

}

\end{document}